\theoremstyle{plain}
\newtheorem{theorem}{Theorem}[section]
\newtheorem{lemma}[theorem]{Lemma}
\newtheorem{proposition}[theorem]{Proposition}
\newtheorem{corollary}[theorem]{Corollary}
\theoremstyle{definition}
\numberwithin{equation}{section}
\DeclareMathOperator{\spec}{Spec}
\DeclareMathOperator{\L-spec}{L-Spec}
\DeclareMathOperator{\Q-spec}{Q-Spec}
\DeclareMathOperator{\cent}{Cent}
\newcommand{\bnum}{\begin{enumerate}}
\newcommand{\enum}{\end{enumerate}}
\begin{document}

\title{Various energies of some super integral groups}
\author[P. Dutta and R. K. Nath]{Parama Dutta  and Rajat Kanti Nath$^*$}
\address{Department of Mathematical Sciences, Tezpur
University,  Napaam-784028, Sonitpur, Assam, India.
}
\email{parama@gonitsora.com and rajatkantinath@yahoo.com$^*$}

\subjclass[2010]{20D99, 05C50, 15A18, 05C25.}
\keywords{commuting graph, spectrum, integral graph, finite group.}

%

\thanks{*Corresponding author}

%
%
\begin{abstract}
In  this paper, we obtain energy,  Laplacian energy and signless Laplacian energy of the commuting graphs of some families of finite non-abelian groups.
\end{abstract}

\maketitle

%
%

\section{Introduction} \label{S:intro}

Let $A({\mathcal{G}})$ and $D({\mathcal{G}})$ denote the adjacency matrix  and degree matrix of a graph  ${\mathcal{G}}$ respectively. Then the Laplacian matrix and   signless Laplacian matrix of ${\mathcal{G}}$ are given by  $L({\mathcal{G}})  = D({\mathcal{G}}) - A({\mathcal{G}})$ and    $Q({\mathcal{G}}) = D({\mathcal{G}}) + A({\mathcal{G}})$ respectively. We write  $\spec({\mathcal{G}})$, $\L-spec({\mathcal{G}})$ and $\Q-spec({\mathcal{G}})$ to denote the spectrum, Laplacian spectrum and Signless Laplacian spectrum of ${\mathcal{G}}$. Also, $\spec({\mathcal{G}}) = \{\alpha_1^{a_1}, \alpha_2^{a_2}, \dots, \alpha_l^{a_l}\}$, $\L-spec({\mathcal{G}}) = \{\beta_1^{b_1}, \beta_2^{b_2}, \dots, \beta_m^{b_m}\}$ and $\Q-spec({\mathcal{G}}) = \{\gamma_1^{c_1}, \gamma_2^{c_2}, \dots, \gamma_n^{c_n}\}$ where $\alpha_1,  \alpha_2, \dots, \alpha_n$ are the eigenvalues of   $A({\mathcal{G}})$ with multiplicities $a_1, a_2, \dots, a_l$;   $\beta_1,  \beta_2, \dots, \beta_m$ are the eigenvalues of  $L({\mathcal{G}})$ with multiplicities $b_1, b_2, \dots, b_m$ and  $\gamma_1, \gamma_2, \dots, \gamma_n$ are the eigenvalues of   $Q({\mathcal{G}})$ with multiplicities $c_1, c_2, \dots, c_n$ respectively.  Harary and Schwenk \cite{hS74} introduced the concept of  integral graphs in 1974. A graph $\mathcal{G}$ is called integral or L-integral or Q-integral according as $\spec({\mathcal{G}})$ or $\L-spec({\mathcal{G}})$ or $\Q-spec({\mathcal{G}})$ contains only integers. One may conf. \cite{anb09,bCrS03, Abreu08,Kirkland07,Merries94,Simic08} for various results of these graphs.

Depending on various spectra of a graph there are various energies called \textit{energy}, \textit{Laplacian energy} and \textit{signless Laplacian energy} denoted by $E(\mathcal{G}),
 LE(\mathcal{G})$ and $LE^+(\mathcal{G})$ respectively. These energies are  defined as follows:

\begin{equation}\label{energy}
E(\mathcal{G}) = \sum_{\lambda \in \spec({\mathcal{G}})}|\lambda|,
\end{equation}

\begin{equation}\label{Lenergy}
LE(\mathcal{G}) = \sum_{\mu \in \L-spec({\mathcal{G}})}\left|\mu - \frac{2|e(\mathcal{G}|)}{|v(\mathcal{G})|}\right|,
\end{equation}
 and
\begin{equation}\label{Qenergy}
LE^+(\mathcal{G}) = \sum_{\nu \in \Q-spec({\mathcal{G}})}\left|\nu - \frac{2|e(\mathcal{G}|)}{|v(\mathcal{G})|}\right|,
\end{equation}
where $v(\mathcal{G})$ and $e(\mathcal{G})$ denotes the set of vertices and edges of $\mathcal{G}$ respectively.


Let $G$ be a finite non-abelian group with center $Z(G)$. The commuting graph of  $G$, denoted by $\Gamma_G$, is a simple undirected graph whose vertex set is $G\setminus Z(G)$, and two distinct vertices $x$ and $y$ are adjacent if and only if $xy = yx$.
  Various   aspects of commuting graphs of different finite groups can be found in  \cite{amr06,iJ07,mP13,par13}. In \cite{Dutta16,DN16,dn016}, Dutta and Nath have computed various spectra of $\Gamma_G$ for different families of finite groups. A finite non-abelian group is called super integral if $\spec(\Gamma_G), \L-spec(\Gamma_G)$  and $\Q-spec(\Gamma_G)$ contain only integers. Various examples of  super integral groups can be found in \cite{dn016}. The energy, Laplacian energy  and  signless Laplacian energy of $\Gamma_G$ are called    energy, Laplacian energy  and  signless Laplacian energy of   $G$ respectively.    In this paper, we compute various energies of $G$ for some families of super integral groups. It may be mentioned here that the Laplacian energy of non-commuting graphs of various finite non-abelian groups are computed in \cite{Dutta_Nath_2016}.

\section{Some Computations}


In this section, we compute    various energies of  the  commuting graphs of some  families of finite non-abelian groups.  We begin with some families of groups whose central factors are some well-known  groups.
\begin{theorem} \label{order-20}
Let $G$ be a finite group and $\frac{G}{Z(G)} \cong Sz(2)$, where $Sz(2)$ is the Suzuki group presented by $\langle a, b : a^5 = b^4 = 1, b^{-1}ab = a^2 \rangle$. Then
\[
E(\Gamma_G) =  38|Z(G)| - 12,\quad
LE(\Gamma_G) =  \begin{cases}
\frac{732|Z(G)| - 228}{19} & \text{ if } |Z(G)| \leq 4\\
\frac{120|Z(G)|^2 + 122|Z(G)| - 38}{19}  & \text{ if } |Z(G)| > 4
\end{cases} \text{\quad and}
\]
\[
LE^+(\Gamma_G) = \begin{cases}
\frac{712|Z(G)| - 228 }{19} & \text{ if } |Z(G)| = 1\\
\frac{120|Z(G)|^2 - 530|Z(G)| - 190 }{19}  & \text{ if } |Z(G)| > 1.
\end{cases}
\]
\end{theorem}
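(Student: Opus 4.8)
The plan is to reduce the problem to a purely graph-theoretic computation by first determining the isomorphism type of $\Gamma_G$, then reading off the three spectra and evaluating the energies directly from the definitions \eqref{energy}--\eqref{Qenergy}. Throughout write $z = |Z(G)|$.

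The structural step is the heart of the argument. Since $G/Z(G) \cong Sz(2)$ has order $20$, we have $|G| = 20z$ and $|v(\Gamma_G)| = 19z$. The group $Sz(2)$ is the Frobenius group $F_{20}$, whose nonidentity elements are partitioned among its six maximal cyclic subgroups: the normal subgroup $\langle \bar a\rangle \cong \mathbb{Z}_5$ and the five Sylow $2$-subgroups isomorphic to $\mathbb{Z}_4$ (which meet pairwise trivially, giving $4 + 5\cdot 3 = 19$ nonidentity elements). For each such subgroup $\bar H \le G/Z(G)$, its preimage $H \le G$ satisfies $H/Z(G) \cong \bar H$ cyclic, hence $H$ is abelian, so the $|H| - z$ noncentral elements of $H$ form a clique in $\Gamma_G$. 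Conversely, two noncentral $x,y$ commute in $G$ only if $\bar x,\bar y$ commute in $G/Z(G)$, and since in $Sz(2)$ the centralizer of any nonidentity element is precisely the maximal cyclic subgroup containing it, $x$ and $y$ must then lie in a common preimage $H$. Hence $\Gamma_G$ is the disjoint union of the cliques arising from these six preimages, namely $\Gamma_G \cong K_{4z} \sqcup 5K_{3z}$ (one clique on $5z - z = 4z$ vertices and five on $4z - z = 3z$ vertices).

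Next I would record the spectra. For $K_n$ one has $\spec(K_n) = \{(n-1)^1,(-1)^{n-1}\}$, its Laplacian spectrum is $\{0^1, n^{n-1}\}$, and its signless Laplacian spectrum is $\{(2n-2)^1,(n-2)^{n-1}\}$; for a disjoint union all three matrices are block diagonal, so the corresponding spectra are the multiset unions of the components' spectra. Applying this to $K_{4z}\sqcup 5K_{3z}$ gives $\spec(\Gamma_G)$ consisting of $4z-1$ (once), $3z-1$ (five times), and $-1$ (with multiplicity $19z-6$); the Laplacian spectrum consisting of $4z$ (multiplicity $4z-1$), $3z$ (multiplicity $15z-5$), and $0$ (multiplicity $6$); and the signless Laplacian spectrum consisting of $8z-2$ (once), $4z-2$ (multiplicity $4z-1$), $6z-2$ (five times), and $3z-2$ (multiplicity $15z-5$). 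In particular all three are integral, confirming that $G$ is super integral, as required.

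Finally I would evaluate the energies. The energy $E(\Gamma_G)$ is immediate: for every $z\ge 1$ each distinct eigenvalue $4z-1$, $3z-1$, $-1$ keeps a fixed sign, so $E(\Gamma_G) = (4z-1) + 5(3z-1) + (19z-6) = 38z - 12$. For $LE$ and $LE^+$ I first compute the average degree $\frac{2|e(\Gamma_G)|}{|v(\Gamma_G)|}$: counting edges as $\binom{4z}{2} + 5\binom{3z}{2} = \frac{61z^2 - 19z}{2}$ yields $\bar d = \frac{61z-19}{19}$. Substituting into \eqref{Lenergy} and \eqref{Qenergy} and expanding $|\mu - \bar d|$ term by term reduces each energy to a sum of linear expressions in $z$ over the denominator $19$. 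The only nonroutine feature, and the place requiring genuine care, is that the sign of one deviation flips as $z$ grows: for $LE$ the term $3z - \bar d = \frac{19-4z}{19}$ changes sign between $z=4$ and $z=5$, producing the split at $|Z(G)|\le 4$ versus $|Z(G)| > 4$; for $LE^+$ the term $4z-2-\bar d = \frac{15z-19}{19}$ changes sign between $z=1$ and $z=2$, producing the split at $|Z(G)| = 1$ versus $|Z(G)| > 1$. Resolving the absolute values in each regime and collecting terms gives the stated piecewise formulas. I expect the structural identification $\Gamma_G \cong K_{4z}\sqcup 5K_{3z}$ — resting on the Frobenius/TI structure of $Sz(2)$ to show that centralizers of noncentral elements are exactly the preimages of maximal cyclic subgroups — to be the main obstacle; after that the computation is bookkeeping, with the sole subtlety being the correct placement of the two case boundaries.
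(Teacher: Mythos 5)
Your approach is essentially the paper's: the paper likewise rests on the decomposition $\Gamma_G = K_{4|Z(G)|}\sqcup 5K_{3|Z(G)|}$ (it uses it explicitly to count edges), obtains the same three spectra (by citation to earlier work rather than by your direct derivation from the spectra of complete graphs), computes the same average degree $\frac{61|Z(G)|-19}{19}$, and splits into the same cases at the same thresholds. The only genuine difference is that you prove the structural identification from the Frobenius/TI structure of $Sz(2)$ and the standard fact that a group with cyclic central quotient of a subgroup is abelian, whereas the paper imports the spectra as black boxes; your version is self-contained and correctly justifies why the cliques are exactly the preimages of the six maximal cyclic subgroups.

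One caution: you assert that resolving the absolute values ``gives the stated piecewise formulas,'' but for $LE^+$ with $|Z(G)|>1$ the bookkeeping does not reproduce the statement as printed. Summing the four contributions
$\frac{91|Z(G)|-19}{19}$, $\frac{(4|Z(G)|-1)(15|Z(G)|-19)}{19}$, $\frac{5(53|Z(G)|-19)}{19}$ and $\frac{(15|Z(G)|-5)(4|Z(G)|+19)}{19}$
yields $\frac{120|Z(G)|^2 + 530|Z(G)| - 190}{19}$, with a plus sign on the linear term; the $-530|Z(G)|$ in the theorem (and in the last line of the paper's proof) is a sign slip, as one can check at $|Z(G)|=2$, where the printed formula would give a negative energy while the direct computation on $K_8\sqcup 5K_6$ gives $\frac{1350}{19}$. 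So your method is sound, but if you carry the computation to the end you should report the corrected coefficient rather than claim agreement with the displayed formula.
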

\begin{proof}

By \cite[Theorem 2]{Dutta16}, we have
\[
\spec(\Gamma_G) = \{(-1)^{19|Z(G)| - 6}, (4|Z(G)| - 1)^1,  (3|Z(G)| - 1)^5\}.
\]
Therefore, by \eqref{energy}, we have
\begin{align*}
E(\Gamma_G) = & 19|Z(G)| - 6 + 4|Z(G)| - 1 + 5(3|Z(G)| - 1)
=  38|Z(G)| - 12.
\end{align*}
Also, $v(\Gamma_G) = 19|Z(G)|$ and $e(\Gamma_G) = \frac{4|Z(G)|(4|Z(G)| - 1) + 15|Z(G)|(3|Z(G)| - 1)}{2}$ as $\Gamma_G = K_{4|Z(G)|}\sqcup 5K_{3|Z(G)|}$. Therefore,
\begin{align*}
\frac{2|e(\Gamma_G)|}{|v(\Gamma_G)|} = & \frac{4|Z(G)|(4|Z(G)| - 1) + 15|Z(G)|(3|Z(G)| - 1)}{19|Z(G)|}\\
= & \frac{16|Z(G)| - 4 + 45|Z(G)| - 15}{19} = \frac{61|Z(G)| - 19}{19}.
\end{align*}
Note that for any two  integers $r, s$, we have
\begin{equation}\label{eqsuzuki1}
r|Z(G)| + s -  \frac{2|e(\Gamma_G)|}{|v(\Gamma_G)|} = \frac{(19r - 61)|Z(G)| + 19(s + 1)}{19}.
\end{equation}

By \cite[Theorem 2.2]{dn016},  we have
\[
\L-spec(\Gamma_G) =  \{0^6, (4|Z(G)|)^{4|Z(G)| - 1}, (3|Z(G)|)^{15|Z(G)| - 5}\}.
\]
Using \eqref{eqsuzuki1}, we have $\left|0 - \frac{2|e(\mathcal{G}|)}{|v(\mathcal{G})|}\right| = \frac{61|Z(G)| - 19}{19}$, $
\left|4|Z(G)|  - \frac{2|e(\Gamma_G)|}{|v(\Gamma_G)|}\right| =  \frac{15|Z(G)| + 19}{19}$ and
\[
\left|3|Z(G)| - \frac{2|e(\Gamma_G)|}{|v(\Gamma_G)|}\right| = \left|\frac{-4|Z(G)| + 19}{19}\right| = \begin{cases}
\frac{-4|Z(G)| + 19}{19} & \text{ if } |Z(G)| \leq 4\\
\frac{4|Z(G)| - 19}{19}  & \text{ if } |Z(G)| > 4.
\end{cases}
\]
Therefore, if $|Z(G)| \leq 4$, then by \eqref{Lenergy} we have
\begin{align*}
LE(\Gamma_G) = & \frac{366|Z(G)| - 114}{19} + \frac{(4|Z(G)| - 1)(15|Z(G)| + 19)}{19} + \frac{(15|Z(G)| - 5)(-4|Z(G)| + 19)}{19} \\
= & \frac{366|Z(G)| - 114 + 60|Z(G)|^2 + 61|Z(G)| - 19 -60|Z(G)|^2 + 305|Z(G)| -95}{19}\\
= & \frac{732|Z(G)| - 228}{19}.
\end{align*}
If $|Z(G)| > 4$, then by \eqref{Lenergy} we have
\begin{align*}
LE(\Gamma_G) = & \frac{366|Z(G)| - 114}{19} + \frac{(4|Z(G)| - 1)(15|Z(G)| + 19)}{19} + \frac{(15|Z(G)| - 5)(4|Z(G)| - 19)}{19} \\
= & \frac{366|Z(G)| - 114 + 60|Z(G)|^2 + 61|Z(G)| - 19 + 60|Z(G)|^2 - 305|Z(G)| +95}{19}\\
= & \frac{120|Z(G)|^2 + 122|Z(G)| - 38}{19}.
\end{align*}

By \cite[Theorem 2.2]{dn016}  we also have
\[
\Q-spec(\Gamma_G) = \{(8|Z(G)| - 2)^1, (4|Z(G)| - 2)^{4|Z(G)| - 1}, (6|Z(G)| - 2)^5, (3|Z(G)| - 2)^{15|Z(G)| - 5}\}.
\]
Now, using \eqref{eqsuzuki1} we have

\noindent $\left|8|Z(G)| - 2 - \frac{2|\Gamma_G)|}{|v(\Gamma_G)|}\right| = \frac{91|Z(G)| - 19}{19}$, $
\left|4|Z(G)| - 2  - \frac{2|e(\Gamma_G)|}{|v(\Gamma_G)|}\right| = \begin{cases}
\frac{-15|Z(G)| +  19}{19} & \text{ if } |Z(G)| = 1\\
\frac{15|Z(G)| -  19}{19} & \text{ if } |Z(G)| > 1,
\end{cases}$

\noindent  $
\left|6|Z(G)| - 2  - \frac{2|e(\Gamma_G)|}{|v(\Gamma_G)|}\right| =  \frac{53|Z(G)| - 19}{19}$ and $\left|3|Z(G)| - 2  - \frac{2|e(\Gamma_G)|}{|v(\Gamma_G)|}\right| =  \frac{4|Z(G)| + 19}{19}$. Hence, if $|Z(G)|$ $= 1$, then  by \eqref{Qenergy} we have
\begin{align*}
LE^+({\mathcal{G}}) = & \frac{91|Z(G)| - 19}{19} + \frac{(4|Z(G)| - 1)(-15|Z(G)| +  19)}{19} +  \frac{265|Z(G)| - 95}{19}\\
& + \frac{(15|Z(G)| - 5)(4|Z(G)| + 19)}{19}\\
=  & \frac{712|Z(G)| - 228 }{19}.
\end{align*}
If $|Z(G)| > 1$, then by \eqref{Qenergy} we have
\begin{align*}
LE^+({\mathcal{G}}) = & \frac{91|Z(G)| - 19}{19} + \frac{(4|Z(G)| - 1)(15|Z(G)| -  19)}{19} +  \frac{5(53|Z(G)| - 19)}{19}\\
&  + \frac{(15|Z(G)| - 5)(4|Z(G)| + 19)}{19}\\
=  & \frac{120|Z(G)|^2 - 530|Z(G)| - 190 }{19}.
\end{align*}
\end{proof}

\begin{theorem}\label{main2}
Let $G$ be a finite group such that $\frac{G}{Z(G)} \cong {\mathbb{Z}}_p \times {\mathbb{Z}}_p$, where $p$ is a prime integer. Then
\[
E(\Gamma_G) =  LE(\Gamma_G) = LE^+(\Gamma_G) = 2(p^2 - 1)|Z(G)| - 2(p + 1).
\]
\end{theorem}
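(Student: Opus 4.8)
The plan is to follow the same two-stage strategy used in Theorem~\ref{order-20}: first identify the commuting graph up to isomorphism and record its three spectra, then substitute into the definitions \eqref{energy}, \eqref{Lenergy} and \eqref{Qenergy}. The structural input is the standard observation that when $\frac{G}{Z(G)} \cong \mathbb{Z}_p \times \mathbb{Z}_p$, the centralizer in $G$ of any non-central element is the preimage of one of the $p+1$ subgroups of order $p$ in $\mathbb{Z}_p \times \mathbb{Z}_p$. Each such centralizer is abelian of order $p|Z(G)|$ and contains $Z(G)$, and distinct ones meet exactly in $Z(G)$. Removing the center therefore partitions $G \setminus Z(G)$ into $p+1$ mutually non-adjacent cliques, each of size $p|Z(G)| - |Z(G)| = (p-1)|Z(G)|$, so that
\[
\Gamma_G = (p+1)\,K_{(p-1)|Z(G)|}.
\]
As a sanity check, this has $(p+1)(p-1)|Z(G)| = (p^2-1)|Z(G)| = |G| - |Z(G)|$ vertices, as required.

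Writing $n = (p-1)|Z(G)|$, I would next read off the three spectra of $(p+1)K_n$ from the standard spectra of a single complete graph. Using $\spec(K_n) = \{(n-1)^1, (-1)^{n-1}\}$, $\L-spec(K_n) = \{0^1, n^{n-1}\}$ and $\Q-spec(K_n) = \{(2n-2)^1, (n-2)^{n-1}\}$, together with the fact that the spectrum of a disjoint union is the concatenation of the component spectra, gives
\[
\spec(\Gamma_G) = \{(n-1)^{p+1},\, (-1)^{(p+1)(n-1)}\},\quad
\L-spec(\Gamma_G) = \{0^{p+1},\, n^{(p+1)(n-1)}\},
\]
together with $\Q-spec(\Gamma_G) = \{(2n-2)^{p+1},\, (n-2)^{(p+1)(n-1)}\}$. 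Since every component is a clique of the same size $n$, the graph is regular of degree $n-1$, so $\frac{2|e(\Gamma_G)|}{|v(\Gamma_G)|} = n-1$.

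Finally I would substitute. For the energy, $E(\Gamma_G) = (p+1)|n-1| + (p+1)(n-1)|-1| = 2(p+1)(n-1)$. For the Laplacian energy the two shifted quantities are $|0-(n-1)| = n-1$ and $|n-(n-1)| = 1$, giving $LE(\Gamma_G) = (p+1)(n-1) + (p+1)(n-1) = 2(p+1)(n-1)$; similarly for the signless Laplacian, $|(2n-2)-(n-1)| = n-1$ and $|(n-2)-(n-1)| = 1$, so $LE^+(\Gamma_G) = 2(p+1)(n-1)$ as well. Resubstituting $n = (p-1)|Z(G)|$ turns each expression into $2(p+1)\big((p-1)|Z(G)|-1\big) = 2(p^2-1)|Z(G)| - 2(p+1)$, the claimed common value. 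The one point that deserves care---and the reason this result, unlike Theorem~\ref{order-20}, needs no case split on $|Z(G)|$---is that the average degree equals $n-1$ exactly, so every absolute value collapses with a fixed sign (all relevant differences are $\pm(n-1)$ or $\pm 1$) independently of the size of the center; here I would simply note that $n = (p-1)|Z(G)| \geq 1$ guarantees $|n-1| = n-1$, so no sub-cases arise.
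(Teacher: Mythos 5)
Your proposal is correct and follows essentially the same route as the paper: both identify $\Gamma_G = (p+1)K_{(p-1)|Z(G)|}$, use the resulting (identical) adjacency, Laplacian and signless Laplacian spectra, note that the average degree is exactly $(p-1)|Z(G)|-1$, and substitute into \eqref{energy}, \eqref{Lenergy} and \eqref{Qenergy}. The only difference is cosmetic: the paper imports the three spectra from \cite{DN16} and \cite{dn016}, whereas you rederive them from the clique decomposition, which makes your write-up self-contained but does not change the argument.
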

\begin{proof}
By \cite[Theorem 2.1]{DN16} we have
\[
\spec(\Gamma_G) = \{(-1)^{(p^2 - 1)|Z(G)| - p - 1}, ((p - 1)|Z(G)| - 1)^{p + 1}\}.
\]
Therefore, by \eqref{energy}, we have
\[
E(\Gamma_G) = (p^2 - 1)|Z(G)| - p - 1 + (p + 1)((p - 1)|Z(G)| - 1) = 2(p^2 - 1)|Z(G)| - 2(p + 1).
\]
We have, $|v(\Gamma_G)| = (p^2 - 1)|Z(G)|$ and $\Gamma_G = (p + 1) K_{(p - 1)|Z(G)|}$. Therefore, $2|e(\Gamma_G)| = (p^2 - 1)|Z(G)|((p - 1)|Z(G)| - 1)$ and so
\[
\frac{2|e(\Gamma_G)|}{|v(\Gamma_G)|} = (p - 1)|Z(G)| - 1.
\]

By \cite[Theorem 2.3]{dn016}, we have
\[
\L-spec(\Gamma_G) = \{0^{p +1}, ((p - 1)|Z(G)|)^{(p^2 - 1)|Z(G)| - p - 1}\}.
\]
Now, $\left|0 - \frac{2|e(\Gamma_G)|}{|v(\Gamma_G)|}\right| = (p - 1)|Z(G)| - 1$ and  $\left|(p - 1)|Z(G)| - \frac{2|e(\Gamma_G)|}{|v(\Gamma_G)|}\right| =  1$. Hence, by \eqref{Lenergy}, we have
\[
LE(\Gamma_G) = (p + 1)((p - 1)|Z(G)| - 1) + (p^2 - 1)|Z(G)| - p - 1 =  2(p^2 - 1)|Z(G)| - 2(p + 1).
\]

By \cite[Theorem 2.3]{dn016}, we also have
\[
\Q-spec(\Gamma_G) = \{(2(p - 1)|Z(G)| - 2)^{p + 1}, ((p - 1)|Z(G)| - 2)^{(p^2 - 1)|Z(G)| - p - 1}\}.
\]
Now, $\left|2(p - 1)|Z(G)| - 2 - \frac{2|e(\Gamma_G)|}{|v(\Gamma_G)|}\right| = (p - 1)|Z(G)| - 1$ and  $\left|(p - 1)|Z(G)| - 2 - \frac{2|e(\Gamma_G)|}{|v(\Gamma_G)|}\right| =  1$. Hence, by \eqref{Qenergy}, we have
\[
LE^+(\Gamma_G) = (p + 1)((p - 1)|Z(G)| - 1) + (p^2 - 1)|Z(G)| - p - 1 = 2(p^2 - 1)|Z(G)| - 2(p + 1).
\]
\end{proof}
\noindent As a corollary we have the following result.
\begin{corollary}
Let $G$ be a non-abelian group of order $p^3$, for any prime $p$, then
\[
E(\Gamma_G) =  LE(\Gamma_G) = LE^+(\Gamma_G) = 2p^3 - 4p  - 2.
\]
\end{corollary}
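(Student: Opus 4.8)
The plan is to obtain this corollary as a direct specialization of Theorem \ref{main2}. The only genuine work is to verify that a non-abelian group $G$ of order $p^3$ satisfies the hypothesis of that theorem, namely $G/Z(G) \cong \mathbb{Z}_p \times \mathbb{Z}_p$, and to read off the value of $|Z(G)|$; after that the formula is a one-line substitution.

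First I would pin down the center. Since $G$ is a nontrivial $p$-group, its center is nontrivial, so $|Z(G)| \in \{p, p^2, p^3\}$. If $|Z(G)| = p^3$ then $G$ is abelian, a contradiction; if $|Z(G)| = p^2$ then $G/Z(G)$ has order $p$, hence is cyclic, which forces $G$ to be abelian, again a contradiction. Therefore $|Z(G)| = p$ and $|G/Z(G)| = p^2$. A group of order $p^2$ is abelian and is either cyclic or isomorphic to $\mathbb{Z}_p \times \mathbb{Z}_p$; but $G/Z(G)$ cannot be cyclic (otherwise $G$ would be abelian), so $G/Z(G) \cong \mathbb{Z}_p \times \mathbb{Z}_p$, as required.

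With the hypothesis of Theorem \ref{main2} in force and $|Z(G)| = p$, I would substitute into the common value
\[
E(\Gamma_G) = LE(\Gamma_G) = LE^+(\Gamma_G) = 2(p^2 - 1)|Z(G)| - 2(p + 1)
\]
established there. Setting $|Z(G)| = p$ yields $2(p^2 - 1)p - 2(p + 1) = 2p^3 - 2p - 2p - 2 = 2p^3 - 4p - 2$, which is precisely the asserted value.

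Since each step is either a standard structural fact about $p$-groups or a direct appeal to the already-proved Theorem \ref{main2}, there is no real obstacle. The only point demanding any care is the elementary center computation, and specifically the exclusion of the case $G/Z(G) \cong \mathbb{Z}_{p^2}$, so that the quotient is forced to be $\mathbb{Z}_p \times \mathbb{Z}_p$ and the theorem applies.
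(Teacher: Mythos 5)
Your proposal is correct and follows exactly the route of the paper's own proof: establish $|Z(G)| = p$ and $G/Z(G) \cong \mathbb{Z}_p \times \mathbb{Z}_p$, then substitute into Theorem \ref{main2}. The only difference is that you spell out the standard $p$-group argument that the paper leaves as "note that," and your arithmetic $2(p^2-1)p - 2(p+1) = 2p^3 - 4p - 2$ is right.
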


\begin{proof}
Note that $|Z(G)| = p$ and  $\frac{G}{Z(G)} \cong {\mathbb{Z}}_p \times {\mathbb{Z}}_p$. Hence the  result follows from Theorem \ref{main2}.
\end{proof}

\begin{theorem}\label{main4}
Let $G$ be a finite group such that $\frac{G}{Z(G)} \cong D_{2m}$, for $m \geq 2$. Then
\begin{enumerate}
\item $E(\Gamma_G) = (4m - 2)|Z(G)| - 2(m + 1)$.
\item If $m = 2$;   $m = 3$ and $|Z(G)|=1,2$;
$m = 4$  and $|Z(G)| = 1$ then
\[
LE(\Gamma_G) = \frac{(2m^3 + 2)|Z(G)| - 4m^2 - 2m + 2}{2m - 1}.
\]
\item If $m \geq 3$ and $|Z(G)| \geq 3$;  or $m = 4$ and $|Z(G)| \geq 2$; or $m \geq 5$ then
\[
LE(\Gamma_G) = \frac{(2m^3 - 6m^2 + 4m)|Z(G)|^2 + (2m^2 - 2m + 2)|Z(G)| - 4m + 2}{2m - 1}.
\]
\item If $m = 2$ then $LE^+(\Gamma_G) = 6|Z(G)| - 6.$
\item If $m = 3$ and $|Z(G)| = 1$ then $LE^+(\Gamma_G) = \frac{16}{5}$.
\item If $m = 3$ and $|Z(G)| \geq 2$ then $LE^+(\Gamma_G) = \frac{12|Z(G)|^2 + 18|Z(G)| - 30}{5}$.
\item If $m = 4$ and $|Z(G)| \leq 6$ then $LE^+(\Gamma_G) = \frac{48|Z(G)|^2}{7}$.
\item If $m = 4$ and $|Z(G)| > 6$ then $LE^+(\Gamma_G) = \frac{48|Z(G)|^2 + 8|Z(G)| - 56}{7}$.
\item If $m \geq 5$  then $LE^+(\Gamma_G) = \frac{(2m^3 - 6m^2 + 4m)|Z(G)|^2 + (m^3 - 7m^2 + 4m)|Z(G)| - 2m^2 + 3m - 1}{2m - 1}$.
\end{enumerate}

\end{theorem}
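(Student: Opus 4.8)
The plan is to follow the template of the proofs of Theorems \ref{order-20} and \ref{main2}: quote the adjacency, Laplacian and signless Laplacian spectra of $\Gamma_G$ from \cite{DN16,dn016}, record the order and the average degree $d := \frac{2|e(\Gamma_G)|}{|v(\Gamma_G)|}$, and then evaluate \eqref{energy}, \eqref{Lenergy} and \eqref{Qenergy} term by term. The structural input is that here $\Gamma_G = K_{(m-1)|Z(G)|} \sqcup m K_{|Z(G)|}$, whence $|v(\Gamma_G)| = (2m-1)|Z(G)|$ and, after cancelling one factor of $|Z(G)|$,
\[
d = \frac{(m-1)\big((m-1)|Z(G)|-1\big) + m\big(|Z(G)|-1\big)}{2m-1} = \frac{(m^2 - m + 1)|Z(G)| - (2m-1)}{2m-1}.
\]
Exactly as in \eqref{eqsuzuki1}, I would first record that for all integers $r,s$
\[
r|Z(G)| + s - d = \frac{\big((2m-1)r - (m^2 - m + 1)\big)|Z(G)| + (2m-1)(s+1)}{2m-1},
\]
since every eigenvalue below has the form $r|Z(G)| + s$; this reduces every term of \eqref{Lenergy} and \eqref{Qenergy} to deciding the sign of a linear function of $|Z(G)|$. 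Part (1) is then immediate: the adjacency spectrum is $\{(-1)^{(2m-1)|Z(G)| - m - 1}, ((m-1)|Z(G)|-1)^1, (|Z(G)|-1)^m\}$, both non-central eigenvalues are non-negative for $m \ge 2$, so \eqref{energy} is the plain sum of magnitudes and collapses to $(4m-2)|Z(G)| - 2(m+1)$ with no case distinction.

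For parts (2)–(3) the Laplacian eigenvalues are $0$ (multiplicity $m+1$), $(m-1)|Z(G)|$ and $|Z(G)|$. The deviation at $0$ is $d$, and at $(m-1)|Z(G)|$ it equals $\frac{m(m-2)|Z(G)| + (2m-1)}{2m-1}$; both are non-negative for $m \ge 2$, so only the eigenvalue $|Z(G)|$ can change sign, namely
\[
|Z(G)| - d = \frac{-(m-1)(m-2)|Z(G)| + (2m-1)}{2m-1},
\]
which is non-negative exactly when $(m-1)(m-2)|Z(G)| \le 2m-1$. The integer solutions of this inequality are precisely $m=2$ (all $|Z(G)|$), $m=3$ with $|Z(G)|\le 2$, and $m=4$ with $|Z(G)|=1$, i.e. the cases grouped in (2); the complementary cases give (3). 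Feeding the appropriate sign into \eqref{Lenergy} and collecting over the denominator $2m-1$ yields the two stated formulas.

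For parts (4)–(9) the signless Laplacian eigenvalues are $2(m-1)|Z(G)|-2$, $(m-1)|Z(G)|-2$ (multiplicity $(m-1)|Z(G)|-1$), $2|Z(G)|-2$ (multiplicity $m$) and $|Z(G)|-2$ (multiplicity $m|Z(G)|-m$). Two of the four deviations are unambiguous: one checks $2(m-1)|Z(G)|-2-d \ge 0$ and
\[
|Z(G)|-2-d = \frac{-(m-1)(m-2)|Z(G)| - (2m-1)}{2m-1} < 0
\]
for every $m \ge 2$. The entire case tree comes from the remaining two deviations, whose signs are controlled by
\[
(m-1)|Z(G)| - 2 - d = \frac{m(m-2)|Z(G)| - (2m-1)}{2m-1}
\]
and
\[
2|Z(G)| - 2 - d = \frac{-(m^2 - 5m + 3)|Z(G)| - (2m-1)}{2m-1}.
\]
Cross-tabulating the sign thresholds of these two linear expressions over $m=2$, $m=3$, $m=4$ and $m \ge 5$ produces exactly six sign-patterns, matching cases (4)–(9) one-for-one; in particular the split at $|Z(G)|=6$ for $m=4$ is the sign change of $|Z(G)|-7$ (here $m^2-5m+3=-1$), and for $m \ge 5$ both expressions are sign-definite, leaving the single formula (9). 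I expect the one genuine obstacle to be this bookkeeping: keeping the six regions straight and simplifying each resulting expression over the common denominator $2m-1$. There is no conceptual difficulty once the sign of each $|\nu - d|$ is pinned down and substituted into \eqref{Qenergy}.
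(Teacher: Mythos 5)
Your plan is the paper's own proof, step for step: quote the three spectra from \cite{DN16} and \cite{dn016}, compute $\frac{2|e(\Gamma_G)|}{|v(\Gamma_G)|}=\frac{(m^2-m+1)|Z(G)|-(2m-1)}{2m-1}$ from $\Gamma_G=K_{(m-1)|Z(G)|}\sqcup mK_{|Z(G)|}$, reduce every $|\lambda-d|$ to the sign of a linear function of $|Z(G)|$ via the identity you record (this is exactly \eqref{eqdihedral}), and sum. Your sign analysis agrees with the paper's everywhere: the only Laplacian deviation that changes sign is the one at $|Z(G)|$, with threshold $(m-1)(m-2)|Z(G)|\leq 2m-1$, which reproduces the split between (2) and (3); and for the signless Laplacian the thresholds coming from $m(m-2)|Z(G)|-(2m-1)$ and $(m^2-5m+3)|Z(G)|+(2m-1)$ generate the case tree. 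Parts (1)--(8) will come out as stated.

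One assertion you make on faith would, however, fail: that the $m\geq 5$ bookkeeping ``leaves the single formula (9)''. Note first that you get only four distinct sign patterns, not six: the pattern for $m\geq 5$ is $(+,+,-,-)$, identical to that for $m=4$ and $|Z(G)|\leq 6$, so the four summands fed into \eqref{Qenergy} in case (9) are \emph{literally the same expressions} in $m$ and $|Z(G)|$ as in case (7). Expanding them, the coefficient of $|Z(G)|$ and the constant term both cancel, and the total is $\frac{(2m^3-6m^2+4m)|Z(G)|^2}{2m-1}$ --- which at $m=4$ is the stated $\frac{48|Z(G)|^2}{7}$ of case (7), but is \emph{not} the expression in (9). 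The discrepancy is an arithmetic slip in the paper's simplification of that case (the paper writes down the same four summands for (7) and (9) and reports two different totals). A direct check confirms this: for $G=D_{10}$ we have $m=5$, $|Z(G)|=1$, $\Gamma_G=K_4\sqcup 5K_1$, average degree $\frac{4}{3}$, signless Laplacian spectrum $\{6^1,2^3,0^5\}$, hence $LE^+(\Gamma_G)=\frac{14}{3}+3\cdot\frac{2}{3}+5\cdot\frac{4}{3}=\frac{40}{3}$, whereas formula (9) gives $6$. So carrying out your plan honestly proves (1)--(8) but cannot reach the stated formula in (9); the corollaries that specialize case (9) (e.g.\ the $m\geq 5$ entries of Corollaries \ref{main005} and \ref{q4m}) inherit the same error.
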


\begin{proof}
By \cite[Theorem 2.5]{DN16}, we have
\[
\spec(\Gamma_G) = \{(-1)^{(2m - 1)|Z(G)| - m - 1}, (|Z(G)| - 1)^m, ((m - 1)|Z(G)| - 1)^1\}.
\]
Therefore, by \eqref{Lenergy}, we have
\[
E(\Gamma_G) = (2m - 1)|Z(G)| - m - 1 + m(|Z(G)| - 1) + (m - 1)|Z(G)| - 1 = (4m - 2)|Z(G)| - 2(m + 1).
\]
Note that $|v(\Gamma_G)| = (2m - 1)|Z(G)|$ and $2|e(\Gamma_G)| = (m - 1)|Z(G)|((m - 1)|Z(G)|  - 1) + m|Z(G)|(|Z(G)|$  $- 1)$ since $\Gamma_G = K_{(m - 1)|Z(G)|} \sqcup m K_{|Z(G)|}$. Therefore,
\[
\frac{2|e(\Gamma_G)|}{|v(\Gamma_G)|} = \frac{(m - 1)((m - 1)|Z(G)|  - 1) + m(|Z(G)| - 1)}{2m - 1} = \frac{(m^2 - m + 1)|Z(G)| - 2m + 1}{2m - 1}.
\]
Note that for any two  integers $r, s$ we have
\begin{equation}\label{eqdihedral}
r|Z(G)| + s -  \frac{2|e(\Gamma_G)|}{|v(\Gamma_G)|} = \frac{((2r + 1)m - m^2 - r - 1)|Z(G)| + 2m(s + 1) - s - 1}{2m - 1}.
\end{equation}

By \cite[Theorem 2.5]{dn016}, we have
\[
\L-spec(\Gamma_G) = \{0^{m + 1}, ((m - 1)|Z(G)|)^{(m - 1)|Z(G)| - 1}, (|Z(G)|)^{m(|Z(G)| - 1)}\}
\]
Therefore, using \eqref{eqdihedral}, we have

$\left|0 - \frac{2|e(\Gamma_G)|}{|v(\Gamma_G)|}\right| = \frac{(m^2 - m + 1)|Z(G)| - 2m + 1}{2m - 1}$, $\left|(m - 1)|Z(G)| - \frac{2|e(\Gamma_G)|}{|v(\Gamma_G)|}\right| = \frac{(m^2 - 2m)|Z(G)| + 2m - 1}{2m - 1}$ and

$\left||Z(G)| - \frac{2|e(\Gamma_G)|}{|v(\Gamma_G)|}\right| = \begin{cases}
\frac{(3m - m^2 - 2)|Z(G)| + 2m - 1}{2m - 1} & \text{ if $m = 2$; or $m = 3$ and $|Z(G)| = 1, 2$}; or\\
& \text{ $m = 4$ and $|Z(G)| = 1$}\\
\frac{(-3m + m^2 + 2)|Z(G)| - 2m + 1}{2m - 1} & \text{ if $m = 3$ and $|Z(G)| \geq 3$; or}\\
& \text{ $m = 4$ and $|Z(G)| \geq 2$; or $m \geq 5$}.
\end{cases}$
\\
Therefore, if $m = 2$; or $m = 3$ and $|Z(G)| = 1$ or $2$; or  $m = 4$ and $|Z(G)| = 1$ then by \eqref{Lenergy}, we have
\begin{align*}
LE(&\Gamma_G) \\
=& \frac{(m + 1)((m^2 - m + 1)|Z(G)| - 2m + 1)}{2m - 1} + \frac{((m - 1)|Z(G)| - 1)((m^2 - 2m)|Z(G)| + 2m - 1)}{2m - 1}\\
& + \frac{(m(|Z(G)| - 1))((3m - m^2 - 2)|Z(G)| + 2m - 1)}{2m - 1}\\
= &\frac{(2m^3 + 2)|Z(G)| - 4m^2 - 2m + 2}{2m - 1}.
\end{align*}
\noindent
If $m \geq 3$ and $|Z(G)| = 3$; or $m = 4$ and $|Z(G)| \geq 2$; or $m \geq 5$ then by \eqref{Lenergy} we have
\begin{align*}
LE(&\Gamma_G)\\
= & \frac{(m + 1)((m^2 - m + 1)|Z(G)| - 2m + 1)}{2m - 1} + \frac{((m - 1)|Z(G)| - 1)((m^2 - 2m)|Z(G)| + 2m - 1)}{2m - 1}\\
 & + \frac{(m(|Z(G)| - 1))((-3m + m^2 + 2)|Z(G)| - 2m + 1)}{2m - 1}\\
= & \frac{(2m^3 - 6m^2 + 4m)|Z(G)|^2 + (2m^2 - 2m + 2)|Z(G)| - 4m + 2}{2m - 1}.
\end{align*}

By \cite[Theorem 2.5]{dn016}, we also have
\begin{align*}
\Q-spec(\Gamma_G) =  &\{(2(m - 1)|Z(G)| - 2)^1, ((m - 1)|Z(G)| - 2)^{(m - 1)|Z(G)| - 1},\\
& \hspace{4cm} (2|Z(G)| - 2)^m, (|Z(G)| - 2)^{m(|Z(G)| - 1)}\}.
\end{align*}
Now, using \eqref{eqdihedral}, we have

$\left|2(m - 1)|Z(G)| - 2 - \frac{2|e(\Gamma_G)|}{|v(\Gamma_G)|}\right| = \frac{(3m^2 - 5m + 1)|Z(G)| - 2m + 1}{2m - 1}$,

$\left|(m - 1)|Z(G)| - 2 - \frac{2|e(\Gamma_G)|}{|v(\Gamma_G)|}\right| = \begin{cases}
\frac{(m^2 - 2m)|Z(G)| - 2m + 1}{2m - 1} & \text{ if $m = 3$ and $|Z(G)| \geq 2$; or  $m \geq 4$}\\
 \frac{(-m^2 + 2m)|Z(G)| + 2m - 1}{2m - 1} & \text{ if $m = 2$;  $m = 3$ and $|Z(G)| = 1$},

\end{cases}
$

$\left|2|Z(G)| - 2 - \frac{2|e(\Gamma_G)|}{|v(\Gamma_G)|}\right| = \begin{cases}
\frac{(5m - m^2 - 3)|Z(G)| - 2m + 1}{2m - 1} & \text{ if $m = 2$; $m = 3$ and $|Z(G)| \geq 2$};\\
& \text{ or $m = 4$ and $|Z(G)| > 6 $}\\
\frac{(-5m + m^2 + 3)|Z(G)| + 2m - 1}{2m - 1} & \text{ if  $m = 3$ and $|Z(G)| = 1$};\\
& \text{ $m = 4$ and $|Z(G)| \leq 6$; or $m \geq 5$}
\end{cases}
$ and

$\left||Z(G)| - 2 - \frac{2|e(\Gamma_G)|}{|v(\Gamma_G)|}\right| = \frac{(-3m + m^2 + 2)|Z(G)| + 2m - 1}{2m - 1}$.
\\
If $m = 2$, then by \eqref{Qenergy}, we have
\begin{align*}
LE^+&(\Gamma_G)\\
= &\frac{(3m^2 - 5m + 1)|Z(G)| - 2m + 1}{2m - 1} + \frac{((m - 1)|Z(G)| - 1)((-m^2 + 2m)|Z(G)| + 2m - 1)}{2m - 1}\\
& + \frac{m((5m - m^2 - 3)|Z(G)| - 2m + 1)}{2m - 1} + \frac{m(|Z(G)| - 1)((-3m + m^2 + 2)|Z(G)| + 2m - 1)}{2m - 1}\\
= &\frac{3|Z(G)| - 3}{3} + \frac{3(|Z(G)| - 1)}{3} + \frac{6|Z(G)| - 6}{3} + \frac{6(|Z(G)| - 1)}{3}\\
= & 6|Z(G)| - 6.
\end{align*}
\noindent
If $m = 3$ and $|Z(G)| = 1$, then by \eqref{Qenergy}, we have
\begin{align*}
LE^+&(\Gamma_G)\\
= &\frac{(3m^2 - 5m + 1)|Z(G)| - 2m + 1}{2m - 1} + \frac{((m - 1)|Z(G)| - 1)((-m^2 + 2m)|Z(G)| + 2m - 1)}{2m - 1}\\
& + \frac{m((-5m + m^2 + 3)|Z(G)| + 2m - 1)}{2m - 1} + \frac{m(|Z(G)| - 1)((-3m + m^2 + 2)|Z(G)| + 2m - 1)}{2m - 1}\\
= &\frac{8}{5} + \frac{2}{5} + \frac{6}{5}
= \frac{16}{5}.
\end{align*}
\noindent
If $m = 3$ and $|Z(G)| \geq 2$ then, by \eqref{Qenergy}, we have
\begin{align*}
LE^+&(\Gamma_G)\\
= &\frac{(3m^2 - 5m + 1)|Z(G)| - 2m + 1}{2m - 1} + \frac{((m - 1)|Z(G)| - 1)((m^2 - 2m)|Z(G)| - 2m + 1)}{2m - 1}\\
& + \frac{m((5m - m^2 - 3)|Z(G)| - 2m + 1)}{2m - 1} + \frac{m(|Z(G)| - 1)((-3m + m^2 + 2)|Z(G)| + 2m - 1)}{2m - 1}\\
= &\frac{13|Z(G)| - 5}{5} + \frac{(2|Z(G)| - 1)(3|Z(G)| - 5)}{5} + \frac{9|Z(G)| - 15}{5} + \frac{(3|Z(G)| - 3)( 2|Z(G)| + 5)}{5}\\
= &\frac{13|Z(G)| - 5}{5} + \frac{6|Z(G)|^2 - 13|Z(G)| + 5}{5} + \frac{9|Z(G)| - 15}{5} + \frac{6|Z(G)|^2 + 9|Z(G)| -15}{5}\\
= & \frac{12|Z(G)|^2 + 18|Z(G)| - 30}{5}.
\end{align*}
\noindent
If $m = 4$ and $|Z(G)| \leq 6$ then, by \eqref{Qenergy}, we have
\begin{align*}
LE^+&(\Gamma_G)\\
= &\frac{(3m^2 - 5m + 1)|Z(G)| - 2m + 1}{2m - 1} +  \frac{((m - 1)|Z(G)| - 1)((m^2 - 2m)|Z(G)| - 2m + 1)}{2m - 1}\\
& + \frac{m((-5m + m^2 + 3)|Z(G)| + 2m - 1)}{2m - 1} + \frac{m(|Z(G)| - 1)((-3m + m^2 + 2)|Z(G)| + 2m - 1)}{2m - 1}\\
= &\frac{29|Z(G)| - 7}{7} +  \frac{(3|Z(G)| - 1)(8|Z(G)| - 7)}{7} + \frac{4(-|Z(G)| + 7)}{7} + \frac{4(|Z(G)| - 1)(6|Z(G)| + 7)}{7}\\
= & \frac{48|Z(G)| ^ 2}{7}.
\end{align*}
\noindent
If $m = 4$ and $|Z(G)| > 6$ then, by \eqref{Qenergy}, we have
\begin{align*}
LE^+&(\Gamma_G)\\
= &\frac{(3m^2 - 5m + 1)|Z(G)| - 2m + 1}{2m - 1} + \frac{((m - 1)|Z(G)| - 1)((m^2 - 2m)|Z(G)| - 2m + 1)}{2m - 1}\\
& + \frac{m((5m - m^2 - 3)|Z(G)| - 2m + 1)}{2m - 1} + \frac{m(|Z(G)| - 1)((-3m + m^2 + 2)|Z(G)| + 2m - 1)}{2m - 1}\\
= &\frac{29|Z(G)| - 7}{7} + \frac{24|Z(G)|^2  - 29|Z(G)| + 7}{7} + \frac{4|Z(G)| - 28}{7} + \frac{24|Z(G)|^2 + 4|Z(G)| - 28}{7}\\
= & \frac{48|Z(G)| ^ 2 + 8|Z(G)|- 56}{7}.
\end{align*}
\noindent
If $m \geq 5$  then, by \eqref{Qenergy}, we have
\begin{align*}
LE^+&(\Gamma_G)\\
= &\frac{(3m^2 - 5m + 1)|Z(G)| - 2m + 1}{2m - 1} + \frac{((m - 1)|Z(G)| - 1)((m^2 - 2m)|Z(G)| - 2m + 1)}{2m - 1}\\
& + \frac{m((-5m + m^2 + 3)|Z(G)| + 2m - 1)}{2m - 1} + \frac{m(|Z(G)| - 1)((-3m + m^2 + 2)|Z(G)| + 2m - 1)}{2m - 1}\\
= & \frac{(2m^3 - 6m^2 + 4m)|Z(G)|^2 + (m^3 - 7m^2 + 4m)|Z(G)| - 2m^2 + 3m - 1}{2m - 1}.
\end{align*}
\end{proof}

Using Theorem \ref{main4}, we now compute the energy,  Laplacian energy and signless Laplacian energy of the commuting graphs of the groups $M_{2mn}, D_{2m}$ and $Q_{4n}$ respectively.

\begin{corollary}\label{main05}
Let $M_{2mn} = \langle a, b : a^m = b^{2n} = 1, bab^{-1} = a^{-1} \rangle$ be a metacyclic group, where $m > 2$.
\par
If $m$ is odd then,
\[
E(\Gamma_{M_{2mn}}) = (4m - 2)n - 2(m + 1),
\]

\[
LE(\Gamma_{M_{2mn}}) =
\begin{cases}
  \frac {56n - 40}{5}, & \mbox{if $m = 3$ and $n = 1,2$ }; \\
  \frac{12n^2 + 14n - 10}{5}, & \mbox{if $m = 3$ and $n \geq 3$};\\
  \frac{(2m^3 - 6m^2 + 4m)n^2 + (2m^2 - 2m + 2)n - 4m +2}{2m - 1} & \mbox{otherwise},
 \end{cases}
\]
and
\[
LE^+(\Gamma_{M_{2mn}}) =
\begin{cases}
  \frac {16}{5}, & \mbox{if $m = 3$ and $n = 1$ }; \\
  \frac{12n^2 + 18n - 30}{5}, & \mbox{if $m = 3$ and $n \geq 2$};\\
  \frac{(2m^3 - 6m^2 + 4m)n^2 + (m^3 - 7m^2 + 4m)n -2m^2 + 3m - 1}{2m - 1} & \mbox{otherwise}.
\end{cases}
\]

If $m$ is even then,
\[
E(\Gamma_{M_{2mn}}) = (4m - 4)n - (m + 2),
\]

\[
LE(\Gamma_{M_{2mn}}) =
\begin{cases}
  \frac {16n - 9}{3}, & \mbox{if $m = 4$}; \\
  \frac{72}{5}, & \mbox{if $m = 6$ and $n = 1$}; \\
  \frac{48n^2 + 28n - 10}{5}, & \mbox{if $m = 6$ and $n \geq 2$};\\
  \frac{192n^2 + 52n - 14}{7}, & \mbox{if $m = 8$ and $n \geq 1$ };\\
  \frac{(m^3 - 6m^2 + 8m)n^2 + (m^2 - 2m + 4)n - 2m +2}{m - 1}, & \mbox{otherwise };\\
 \end{cases}
\]

\[
LE^+(\Gamma_{M_{2mn}}) =
\begin{cases}
  12n - 6, & \mbox{if $m = 4$}; \\
  \frac{48n^2 + 36n - 30}{5}, & \mbox{if $m = 6$ and $n \geq 1$}; \\
  \frac{192n^2}{7}, & \mbox{if $m = 8$ and $n \leq 3$};\\
  \frac{192n^2 + 16n - 56}{7}, & \mbox{if $m = 8$ and $n > 3$ };\\
  \frac{(4m^3 - 24m^2 + 32m)n^2 + (m^3 - 14m^2 + 16m)n - 2m^2 + 6m - 4}{4(m - 1)}, & \mbox{otherwise }.\\
 \end{cases}
\]

\end{corollary}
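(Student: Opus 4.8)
The whole statement is a specialisation of Theorem~\ref{main4}, so the plan is to compute $Z(M_{2mn})$ and the central quotient and then substitute. Writing each element uniquely as $a^ib^j$ with $0\le i<m$ and $0\le j<2n$, the defining relation gives $b^ja^ib^{-j}=a^{(-1)^ji}$. Hence $a^ib^j$ commutes with $b$ iff $a^{2i}=1$, i.e. $m\mid 2i$, and it commutes with $a$ iff $(-1)^j\equiv 1\pmod m$, which for $m>2$ forces $j$ to be even. Thus $a^ib^j\in Z(M_{2mn})$ iff $m\mid 2i$ and $j$ is even. When $m$ is odd this gives $i=0$, so $Z(M_{2mn})=\langle b^2\rangle$ has order $n$; when $m$ is even it gives $i\in\{0,m/2\}$, so $Z(M_{2mn})=\langle a^{m/2},b^2\rangle$ has order $2n$.

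I would then identify the quotient. For $m$ odd, $M_{2mn}/Z(M_{2mn})$ has order $2m$ and is generated by $\bar a$ of order $m$ and $\bar b$ of order $2$ with $\bar b\,\bar a\,\bar b^{-1}=\bar a^{-1}$, so it is $D_{2m}$; Theorem~\ref{main4} then applies with dihedral parameter $m$ and $|Z(G)|=n$. For $m$ even, the quotient has order $m$ and is generated by $\bar a$ of order $m/2$ and $\bar b$ of order $2$ with the same relation, so it is $D_{m}=D_{2(m/2)}$; here Theorem~\ref{main4} applies with dihedral parameter $m/2$ (which is $\ge 2$ since $m>2$ is even) and $|Z(G)|=2n$.

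Every formula then follows by substitution. In the odd case one puts $|Z(G)|=n$ into Theorem~\ref{main4}: the energy becomes $(4m-2)n-2(m+1)$, the sub-cases $m=3,\ n=1,2$ versus $m=3,\ n\ge 3$ come from the two $|Z(G)|$-regimes at dihedral parameter $3$, and the remaining row is the generic branch (dihedral parameter $\ge 5$, i.e. $m\ge 5$ odd). In the even case one puts dihedral parameter $m/2$ and $|Z(G)|=2n$: the energy becomes $(4(m/2)-2)(2n)-2(m/2+1)=(4m-4)n-(m+2)$, and after clearing the denominator $2(m/2)-1=m-1$ and simplifying, the $LE$ and $LE^+$ expressions take the displayed forms, the explicit rows $m=6$ and $m=8$ arising from dihedral parameters $3$ and $4$ and the final row from parameter $\ge 5$, i.e. $m\ge 10$ even.

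The one delicate point, and the step I expect to require the most care, is translating the piecewise hypotheses of Theorem~\ref{main4}---stated in its dihedral parameter and $|Z(G)|$---into conditions on $m$ and $n$. In the even case $|Z(G)|=2n\ge 2$ always holds, so every branch of Theorem~\ref{main4} requiring $|Z(G)|=1$ is vacuous, and the inequalities rescale: for dihedral parameter $4$ the threshold $|Z(G)|\le 6$ becomes $n\le 3$ at $m=8$, which is exactly what produces the split between $\frac{192n^2}{7}$ and $\frac{192n^2+16n-56}{7}$ in $LE^+$. Carrying out these translations case by case and simplifying the resulting rational expressions yields the stated values; as a consistency check, the smallest even case $m=4$ corresponds to dihedral parameter $2$, where $D_4\cong\mathbb{Z}_2\times\mathbb{Z}_2$ and Theorem~\ref{main2} (with $p=2$ and $|Z(G)|=2n$) applies as well, giving $E=12n-6$ in agreement with the even-case energy formula.
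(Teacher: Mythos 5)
Your proposal is correct and follows exactly the paper's route: the paper likewise observes that $Z(M_{2mn})$ equals $\langle b^2\rangle$ or $\langle b^2\rangle\cup a^{m/2}\langle b^2\rangle$ and that $M_{2mn}/Z(M_{2mn})\cong D_{2m}$ or $D_m$ according as $m$ is odd or even, and then cites Theorem \ref{main4}; you merely supply the centralizer computation and the case translation that the paper leaves as an ``observe.'' One caveat worth recording: if you actually carry out the substitution for $m=4$ (dihedral parameter $2$, $|Z(G)|=2n$), Theorem \ref{main4}(2) --- and your own cross-check via Theorem \ref{main2} --- give $LE(\Gamma_{M_{8n}})=12n-6$, not the $\frac{16n-9}{3}$ printed in the corollary, so that entry is an error in the stated result rather than something your (correct) argument should be expected to reproduce.
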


\begin{proof}
Observe that $Z(M_{2mn}) = \langle b^2 \rangle$ or $\langle b^2 \rangle \cup a^{\frac{m}{2}}\langle b^2 \rangle$ according as $m$ is odd or even.  Also, it is easy to see that $\frac{M_{2mn}}{Z(M_{2mn})} \cong D_{2m}$ or $D_m$ according as $m$ is odd or even. Hence, the result follows from Theorem \ref{main4}.
\end{proof}
\noindent As a corollary to the above result we have the following results.

\begin{corollary}\label{main005}
Let $D_{2m} = \langle a, b : a^m = b^{2} = 1, bab^{-1} = a^{-1} \rangle$ be  the dihedral group of order $2m$, where $m > 2$. Then

If $m$ is odd, then
\[
E(\Gamma_{D_{2m}}) = 2m - 3,
\]

\[
LE(\Gamma_{D_{2m}}) =
\begin{cases}
  \frac {16}{5}, & \mbox{if $m = 3$ }; \\
  \frac{2(m + 1)(m - 1)(m - 2)}{2m - 1} & \mbox{otherwise},
 \end{cases} \quad \text{and} \quad
LE^+(\Gamma_{D_{2m}}) =
\begin{cases}
  \frac {16}{5}, & \mbox{if $m = 3$ }; \\
  \frac{3m^3 - 15m^2 + 11m - 1}{2m - 1} & \mbox{otherwise}.
 \end{cases}
\]

If $m$ is even, then
\[
E(\Gamma_{D_{2m}}) = 3m - 6,
\]

\[
LE(\Gamma_{D_{2m}}) =
\begin{cases}
  \frac {7}{3}, & \mbox{if $m = 4$ }; \\
  \frac {72}{5}, & \mbox{if $m = 6$ }; \\
  \frac {230}{7}, & \mbox{if $m = 8$ }; \\
  \frac{m^3 - 5m^2 + 4m + 6}{m - 1} & \mbox{otherwise},
 \end{cases}
 \quad \text{and} \quad
LE^+(\Gamma_{D_{2m}}) =
\begin{cases}
  6, & \mbox{if $m = 4$ }; \\
  \frac {54}{5}, & \mbox{if $m = 6$ }; \\
  \frac {192}{7}, & \mbox{if $m = 8$ }; \\
  \frac{5m^3 - 40m^2 + 42m - 4}{4(m - 1)} & \mbox{otherwise}.
 \end{cases}
\]

\end{corollary}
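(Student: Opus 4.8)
The plan is to observe that the dihedral group $D_{2m}$ is precisely the member $n = 1$ of the metacyclic family already treated in Corollary \ref{main05}: setting $n = 1$ in the presentation $\langle a, b : a^m = b^{2n} = 1, bab^{-1} = a^{-1}\rangle$ collapses $b^{2n} = 1$ to $b^2 = 1$, which is exactly the presentation of $D_{2m}$ used in the statement. Hence $\Gamma_{D_{2m}} = \Gamma_{M_{2m\cdot 1}}$, and every energy of $\Gamma_{D_{2m}}$ is obtained by substituting $n = 1$ into the corresponding formula of Corollary \ref{main05}; no new spectral input is required. Equivalently, one may invoke Theorem \ref{main4} directly, using $|Z(D_{2m})| = 1$ with quotient $D_{2m}$ when $m$ is odd, and $|Z(D_{2m})| = 2$ with quotient $D_m$ when $m$ is even.

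First I would match the parity split and the sporadic small-$m$ exceptions of Corollary \ref{main05} to those of the present statement. For $m$ odd, the branch ``$m = 3$ and $n = 1, 2$'' of the Laplacian energy supplies the value at $m = 3$, while the generic branch covers all odd $m \geq 5$; the signless case behaves analogously through its ``$m = 3$ and $n = 1$'' branch. For $m$ even, the exceptional values $m = 4, 6, 8$ persist at $n = 1$ and the generic branch accounts for $m \geq 10$. In each case one simply evaluates the relevant branch of Corollary \ref{main05} at $n = 1$ and reads off the energy of $\Gamma_{D_{2m}}$.

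The remaining work is purely algebraic simplification. For the odd generic Laplacian energy, putting $n = 1$ in the ``otherwise'' formula yields $\frac{2m^3 - 4m^2 - 2m + 4}{2m - 1}$, whose numerator factors as $2(m-2)(m-1)(m+1)$, giving the stated $\frac{2(m+1)(m-1)(m-2)}{2m - 1}$. The other generic branches collapse by the same device: collecting the $n^2$, $n$, and constant contributions at $n = 1$ produces the cubics $3m^3 - 15m^2 + 11m - 1$ (odd signless), $m^3 - 5m^2 + 4m + 6$ (even Laplacian), and the corresponding even signless numerator, which I would present in the displayed reduced form.

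The only genuine hurdle is bookkeeping rather than mathematics: one must ensure that evaluating at $n = 1$ lands in the intended branch of Corollary \ref{main05} rather than an adjacent one, and then carry out the cubic factorizations accurately. Since the underlying spectra are already integral and explicitly listed in the source theorems, there is no analytic content—the corollary is a clean specialization of the metacyclic computation followed by routine polynomial simplification.
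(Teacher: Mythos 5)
Your proposal coincides with the paper's own treatment: Corollary \ref{main005} is given there with no separate proof, precisely as the $n=1$ specialization of Corollary \ref{main05} (equivalently, Theorem \ref{main4} applied with $|Z(D_{2m})|=1$ and quotient $D_{2m}$ for odd $m$, and $|Z(D_{2m})|=2$ and quotient $D_m$ for even $m$), and your factorization of the odd Laplacian numerator as $2(m-2)(m-1)(m+1)$ is exactly the required simplification. One caveat you glossed over: performing the substitution honestly gives $E(\Gamma_{D_{2m}})=(4m-2)\cdot 1-2(m+1)=2m-4$ for odd $m$ (e.g.\ $\Gamma_{D_6}=K_2\sqcup 3K_1$ has energy $2$), not the stated $2m-3$, and likewise the even signless ``otherwise'' numerator comes out as $5m^3-40m^2+54m-4$ rather than $5m^3-40m^2+42m-4$, so these discrepancies lie in the paper's statement rather than in your method.
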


\begin{corollary}\label{q4m}
Let $Q_{4m} = \langle x, y : y^{2m} = 1, x^2 = y^m,yxy^{-1} = y^{-1}\rangle$, where $m \geq 2$, be the   generalized quaternion group of order $4m$. Then

\[
E(\Gamma_{Q_{4m}}) = 6m - 6,
\]

\[
LE(\Gamma_{Q_{4m}}) =
\begin{cases}
  6, & \mbox{if $m = 2$ }; \\
  \frac {72}{5}, & \mbox{if $m = 3$ }; \\
  \frac {230}{7}, & \mbox{if $m = 4$ }; \\
  \frac{8m^3 - 20m^2 + 8m + 6}{2m - 1} & \mbox{otherwise},
 \end{cases}
 \quad \text{and} \quad
LE^+(\Gamma_{Q_{4m}}) =
\begin{cases}
  6, & \mbox{if $m = 2$ }; \\
  \frac {54}{5}, & \mbox{if $m = 3$ }; \\
  \frac {192}{7}, & \mbox{if $m = 4$ }; \\
  \frac{10m^3 - 40m^2 + 27m - 1}{2m - 1} & \mbox{otherwise}.
 \end{cases}
\]
\end{corollary}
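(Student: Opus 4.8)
The plan is to realize $Q_{4m}$ as an instance of the setup in Theorem~\ref{main4} and then specialize the formulas there to the case $|Z(G)| = 2$. The first step is a standard structural computation: for $m \geq 2$ the center of the generalized quaternion group is $Z(Q_{4m}) = \langle y^m \rangle = \{1, y^m\}$, because a power $y^k$ commutes with $x$ precisely when $y^{2k} = 1$, i.e. $k \equiv 0 \pmod m$, and none of the reflection-type elements $xy^k$ is central for $m \geq 2$. In particular $|Z(Q_{4m})| = 2$. The second step is to identify the central quotient: since $\langle y \rangle$ is a normal cyclic subgroup of order $2m$ and conjugation by $x$ inverts $y$, passing to the quotient by the order-two center yields $\frac{Q_{4m}}{Z(Q_{4m})} \cong D_{2m}$, the dihedral group of order $2m$. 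Thus $Q_{4m}$ satisfies the hypothesis of Theorem~\ref{main4} with dihedral parameter $m$ and $|Z(G)| = 2$.

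With these two identifications, everything reduces to substituting $|Z(G)| = 2$ into the formulas of Theorem~\ref{main4}. For the energy this is immediate: part~(1) gives $E(\Gamma_{Q_{4m}}) = (4m - 2)\cdot 2 - 2(m+1) = 6m - 6$. For the Laplacian and signless Laplacian energies the only real work is bookkeeping, namely matching the frozen value $|Z(G)| = 2$ against the piecewise ranges in Theorem~\ref{main4}. Concretely, the branch ``$m = 2$; or $m = 3$ and $|Z(G)| \le 2$'' of part~(2) covers $m = 2, 3$, while part~(3) (via ``$m = 4$ and $|Z(G)| \ge 2$'' or ``$m \ge 5$'') covers $m \ge 4$; substituting $|Z(G)| = 2$ into these two formulas produces the values $6$ and $\frac{72}{5}$ for $m = 2, 3$ and the single rational function $\frac{8m^3 - 20m^2 + 8m + 6}{2m-1}$ for $m \ge 4$ (which reads $\frac{230}{7}$ at $m = 4$). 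The signless Laplacian energy is handled the same way, reading off parts (4), (6), (7), (9) of Theorem~\ref{main4} for $m = 2$, $m = 3$, $m = 4$, and $m \ge 5$ respectively.

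The main point requiring care is precisely this case matching: the piecewise conditions in Theorem~\ref{main4} are phrased in terms of both $m$ and $|Z(G)|$, and one must check which branch is active once $|Z(G)|$ is fixed at $2$. In particular the exceptional small cases $m \in \{2,3,4\}$ each land in a special branch, and for $LE^+$ one must confirm that $|Z(G)| = 2$ satisfies $|Z(G)| \le 6$ so that part~(7) rather than part~(8) applies when $m = 4$. Once the correct branch is selected in each case, the stated expressions follow by direct arithmetic simplification, so there is no genuine analytic obstacle beyond this classification.
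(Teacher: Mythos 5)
Your proposal is correct and is essentially the paper's own argument: the paper likewise proves the corollary by noting $|Z(Q_{4m})|=2$ and $Q_{4m}/Z(Q_{4m})\cong D_{2m}$ and then invoking Theorem~\ref{main4}. Your write-up simply makes explicit the branch-matching at $|Z(G)|=2$ (and the arithmetic, e.g.\ that part (3) at $|Z(G)|=2$ yields $\frac{8m^3-20m^2+8m+6}{2m-1}$, which equals $\frac{230}{7}$ at $m=4$), which the paper leaves to the reader.
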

\begin{proof}
The result follows from Theorem \ref{main4} noting that  $Z(Q_{4m}) = \{1, a^m\}$ and  $\frac{Q_{4m}}{Z(Q_{4m})} \cong D_{2m}$.
\end{proof}

In this section,
Now  we compute various energies of  the  commuting graphs of some  well-known families of finite non-abelian groups.

\begin{proposition}\label{order-pq}
Let $G$ be a non-abelian group of order $pq$, where $p$ and $q$ are primes with $p\mid (q - 1)$. Then
\[
E(\Gamma_G) = 2q(p - 1) - 3, \quad
LE(\Gamma_G) =
\begin{cases}
  \frac{q(q^2 -3q -3pq^2 + 1)}{pq - 1}, & \mbox{if $p=2$ and $q\neq 3$ }, \\
  \frac{2pq(2pq - p - q^2 - 3q + 1)+q(5q^2 - 6q + 4)}{pq - 1}, & \mbox{if $p=2$ and $q=3$  }, \\
  \frac{-2pq(pq - 2p -q^2 + 4) - q(3q^2 - 6q + 2) + 4}{pq - 1}, & \mbox{otherwise};
\end{cases}
\]
and
\[
LE^+(\Gamma_G) =
\begin{cases}
  \frac{2pq(2q - p - 1)-(2q^2 + 3q - 6)}{pq - 1}, & \mbox{if $p=2$ and $q = 3$ }, \\
  \frac{2p^2q(1 - q) + 2q^3(p - 1) + q(2q - 2p + 1) - 2}{pq - 1}, & \mbox{otherwise}.
\end{cases}
\]
\end{proposition}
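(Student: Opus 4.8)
The plan is to follow exactly the template used in Theorems \ref{order-20}--\ref{main4}: first pin down the isomorphism type of $\Gamma_G$, then read off its three spectra, and finally feed these into \eqref{energy}, \eqref{Lenergy} and \eqref{Qenergy}. First I would determine the structure of $G$. A non-abelian group of order $pq$ with $p\mid(q-1)$ (so that $p<q$ and $q\ge 3$) has trivial centre, hence $v(\Gamma_G)=G\setminus\{1\}$ has $pq-1$ vertices. By Sylow's theorems the Sylow $q$-subgroup is normal and unique, while there are exactly $q$ Sylow $p$-subgroups, any two of which meet trivially; consequently the centraliser of a non-central element is either the (cyclic) Sylow $q$-subgroup or one of the $q$ Sylow $p$-subgroups. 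Two non-central elements commute if and only if they lie in the same such subgroup, so $\Gamma_G = K_{q-1}\sqcup q\,K_{p-1}$. From the spectra of complete graphs this immediately yields $\spec(\Gamma_G)=\{(-1)^{pq-q-2},(p-2)^{q},(q-2)^{1}\}$, together with the Laplacian spectrum $\{0^{q+1},(p-1)^{q(p-2)},(q-1)^{q-2}\}$ and the signless Laplacian spectrum $\{(p-3)^{q(p-2)},(2p-4)^{q},(q-3)^{q-2},(2q-4)^{1}\}$; alternatively these three lists may be quoted from the spectral computations of \cite{DN16,dn016}.

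With the spectra in hand, $E(\Gamma_G)$ follows at once from \eqref{energy}: since every listed adjacency eigenvalue other than $-1$ is non-negative, no case distinction is needed and one simply weights each absolute value by its multiplicity and sums. For the two Laplacian-type energies I would first record the average degree $\bar d:=\frac{2|e(\Gamma_G)|}{|v(\Gamma_G)|}$, obtained from $2|e(\Gamma_G)| = (q-1)(q-2)+q(p-1)(p-2)$ and $|v(\Gamma_G)| = pq-1$, and then, mirroring \eqref{eqsuzuki1} and \eqref{eqdihedral}, write a single identity expressing $\mu-\bar d$ as a fraction over the common denominator $pq-1$. The energies $LE(\Gamma_G)$ and $LE^+(\Gamma_G)$ are then assembled by multiplying each such expression by the multiplicity of the corresponding eigenvalue and adding, exactly as in the earlier theorems.

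The hard part will be the sign analysis hidden inside the absolute values of \eqref{Lenergy} and \eqref{Qenergy}. The contributions of $0$ and of the top eigenvalues ($q-1$ for the Laplacian, $2q-4$ for the signless Laplacian) are unambiguous because $\bar d$ lies between the minimum and maximum degree; but the terms $|(p-1)-\bar d|$ and $|(2p-4)-\bar d|,\ |(p-3)-\bar d|$ change sign as $p$ and $q$ vary, and it is precisely these sign reversals that force the case splits in the statement. I expect the boundaries to occur at the smallest admissible primes. In particular, when $p=2$ the $q$ factors $K_{p-1}$ collapse to isolated vertices, so the eigenvalues carried with multiplicity $q(p-2)$ drop out of the Laplacian and signless Laplacian spectra entirely, which is why $p=2$ is singled out; and the value $q=3$ lies on the opposite side of one of these inequalities from the generic range, producing its own exceptional formula.

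Concretely, for each offending eigenvalue $\mu$ I would solve the inequality $\mu\gtrless\bar d$ for $q$ in terms of $p$, test the solution against the arithmetic constraint $q\equiv 1\pmod p$ (which rules out all but finitely many small exceptions), and thereby carve out the ranges listed in the statement. Inside each range the absolute values open with fixed signs, and the weighted sum collapses after routine algebra over the denominator $pq-1$ to the claimed closed form, while the degenerate prime $p=2$ and the small value $q=3$ are handled separately since there a sign is reversed relative to the generic ``otherwise'' case.
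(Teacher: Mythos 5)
Your route is essentially the paper's own: the paper also reduces everything to the decomposition $\Gamma_G = qK_{p-1}\sqcup K_{q-1}$, takes $|v(\Gamma_G)|=pq-1$ and $2|e(\Gamma_G)|=(q-1)(q-2)+q(p-1)(p-2)$, and then opens the absolute values in \eqref{Lenergy} and \eqref{Qenergy} over the common denominator $pq-1$ with case splits at $p=2$ and $q=3$. The only genuine difference is that you derive the structure of $\Gamma_G$ and its three spectra from Sylow theory and the known spectra of complete graphs, whereas the paper simply quotes \cite[Lemma 3]{Dutta16} and \cite[Proposition 2.9]{dn016}; your self-contained derivation is a real (and welcome) addition, and your Laplacian and signless Laplacian spectra agree with the quoted ones.

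There is, however, one concrete point you cannot gloss over. Your adjacency spectrum has $(-1)$ with multiplicity $pq-q-2$, while the paper's cited spectrum has $(-1)^{pq-q-1}$. Your count is the internally consistent one: $K_{q-1}$ contributes $(-1)^{q-2}$ and each of the $q$ copies of $K_{p-1}$ contributes $(-1)^{p-2}$, giving $q(p-2)+(q-2)=pq-q-2$ and a total of $pq-1$ eigenvalues, whereas the paper's multiplicities sum to $pq$, one more than the number of vertices. But with your (correct) multiplicity, \eqref{energy} gives $E(\Gamma_G)=(pq-q-2)+q(p-2)+(q-2)=2q(p-1)-4$, not the claimed $2q(p-1)-3$; a sanity check with $G=S_3$ (so $p=2$, $q=3$, $\Gamma_G=K_2\sqcup 3K_1$, $E(\Gamma_G)=2$) confirms your value. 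So your argument, carried out as described, does not terminate in the stated formula for $E(\Gamma_G)$ --- you should flag the discrepancy explicitly rather than assert that the energy ``follows at once.'' A similar caution applies to the $LE$ and $LE^+$ cases: your sign analysis of $\mu-\tfrac{2|e(\Gamma_G)|}{|v(\Gamma_G)|}$ is the right mechanism (and your observation that the term $|(q-1)-\bar d|$ never changes sign, since $\bar d\le q-2$, is correct), but you must actually expand each case and compare with the stated closed forms rather than assume they will match.
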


\begin{proof}
By \cite[Lemma 3]{Dutta16}, we have
\[
\spec(\Gamma_G) = \{(-1)^{pq - q -1}, (p - 2)^q, (q - 2)^1\}.
\]
Therefore, by \eqref{energy} we have
\[
E(\Gamma_G) = 2q(p - 1) - 3.
\]
Note  that $|v(\Gamma_G)| = pq - 1$ and $|e(\Gamma_G)| = \frac{p^2q - 3pq + q^2 - q + 2}{2}$ since $\Gamma_G = qK_{p - 1} \sqcup  K_{q-1}$. Therefore,
\[
\frac{2|e(\Gamma_G)|}{|v(\Gamma_G)|} = \frac{p^2q - 3pq + q^2 - q + 2}{pq - 1}.
\]

By \cite[Proposition 2.9]{dn016}, we have
\[
\L-spec(\Gamma_G) =  \{0^{q + 1}, (q - 1)^{q - 2}, (p - 1)^{pq - 2q}\}.
\]
\noindent
Therefore,
\[
\left|0 - \frac{2|e(\Gamma_G)|}{|v(\Gamma_G)|}\right| = \frac{p^2q - 3pq + q^2 - q + 2}{pq - 1}, \quad
\left|q - 1 - \frac{2|e(\Gamma_G)|}{|v(\Gamma_G)|}\right| =
\begin{cases}
  \frac{-pq(q - p) + 2q(q - p) + 1}{pq - 1}, & \mbox{if $p=2$ }; \\
  \frac{pq(q - p) - 2q(q - p) - 1}{pq - 1}, & \mbox{otherwise}
\end{cases}
\] and

\[
\left|p - 1 - \frac{2|e(\Gamma_G)|}{|v(\Gamma_G)|}\right| =
\begin{cases}
  \frac{q(2p - q) + (q - p) - 1}{pq - 1}, & \mbox{if $p=2$ and $q=2$ }; \\
  \frac{- q(2p - q) - (q - p) + 1}{pq - 1}, & \mbox{otherwise}.
\end{cases}
\]
\noindent
Hence, by \eqref{Lenergy} we have, if $p=2$ and $q\neq 3$, then
\begin{align*}
LE(\Gamma_G) = & \frac{(q + 1)(p^2q - 3pq + q^2 - q + 2)}{pq-1} + \frac{(q - 2)(-pq(q - p) + 2q(q - p) + 1)}{pq - 1}\\
& + \frac{(pq - 2q)(- q(2p - q) - (q - p) + 1)}{pq - 1}\\
= & \frac{q(q^2 - 3q -3pq^2 + 1)}{pq - 1}.
\end{align*}
\noindent
If $p=2$ and $q = 3$, then
\begin{align*}
LE(\Gamma_G) = & \frac{(q + 1)(p^2q - 3pq + q^2 - q + 2)}{pq-1} + \frac{(q - 2)(-pq(q - p) + 2q(q - p) + 1)}{pq - 1}\\
& + \frac{(pq - 2q)(q(2p - q) + (q - p) - 1)}{pq - 1}\\
= & \frac{2pq(2pq - p - q^2 - 3q + 1) + q(5q^2 - 6q + 4)}{pq - 1}.
\end{align*}
\noindent
Otherwise,
\begin{align*}
LE(\Gamma_G) = & \frac{(q + 1)(p^2q - 3pq + q^2 - q + 2)}{pq-1} + \frac{(q - 2)(pq(q - p) - 2q(q - p) - 1)}{pq - 1}\\
& + \frac{(pq - 2q)(- q(2p - q) - (q - p) + 1)}{pq - 1}\\
= & \frac{-2pq(pq - 2p -q^2 + 4) - q(3q^2 - 6q + 2) + 4}{pq - 1}.
\end{align*}

By \cite[Proposition 2.9]{dn016}, we also have
\[
\Q-spec(\Gamma_G) =  \{(2q - 4)^1, (q - 3)^{q - 2}, (2p - 4)^q, (p - 3)^{pq - 2q}\}.
\]
\noindent
Therefore,
$\left|2q - 4 - \frac{2|e(\Gamma_G)|}{|v(\Gamma_G)|}\right| = \frac{pq(2q - p) - q(p + q + 1) + 2}{pq - 1}$,

\[
\left|q - 3 - \frac{2|e(\Gamma_G)|}{|v(\Gamma_G)|}\right| =
\begin{cases}
  \frac{-pq(q - p) + q^2 - 2}{pq - 1}, & \mbox{if $p=2$ and $q=3$ }; \\
  \frac{pq(q - p) - q^2 + 2}{pq - 1}, & \mbox{otherwise},
\end{cases}
\]

$\left|2p - 4 - \frac{2|e(\Gamma_G)|}{|v(\Gamma_G)|}\right| = \frac{-pq(p - 1) + q(q - 1) +2p - 2}{pq - 1}$ and $\left|p - 3 - \frac{2|e(\Gamma_G)|}{|v(\Gamma_G)|}\right| = \frac{p + q(q - 1) - 1}{pq - 1}$.
\noindent
Therefore, by \eqref{Qenergy} we have, if $p=2$ and $q=3$,

\begin{align*}
  LE^+(\Gamma_G) = & \frac{pq(2q - p) - q(p + q + 1) + 2}{pq - 1} + \frac{(q - 2)(-pq(q - p) + q^2 - 2)}{pq - 1} \\
   + & \frac{q(-pq(p - 1) + q(q - 1) +2p - 2)}{pq - 1} + \frac{(pq - 2q)(p + q(q - 1) - 1)}{pq - 1}\\
   = & \frac{2pq(2q - p - 1)-(2q^2 + 3q - 6)}{pq - 1}.
\end{align*}
\noindent
Otherwise,
\begin{align*}
  LE^+(\Gamma_G) = & \frac{pq(2q - p) - q(p + q + 1) + 2}{pq - 1} + \frac{(q - 2)(pq(q - p) - q^2 + 2)}{pq - 1} \\
   + & \frac{q(-pq(p - 1) + q(q - 1) +2p - 2)}{pq - 1} + \frac{(pq - 2q)(p + q(q - 1) - 1)}{pq - 1}\\
   = & \frac{2p^2q(1 - q) + 2q^3(p - 1) + q(2q - 2p + 1) - 2}{pq - 1}.
\end{align*}
This completes the proof.
\end{proof}

\begin{proposition}\label{semid}
Let $QD_{2^n}$ denote the quasidihedral group $\langle a, b : a^{2^{n-1}} =  b^2 = 1, bab^{-1} = a^{2^{n - 2} - 1}\rangle$, where $n \geq 4$. Then
\[
E(\Gamma_QD_{2^n}) =  3(2^{n - 1} - 2),
\]
\[
LE(\Gamma_QD_{2^n}) =  \frac{2^{3n - 3} - 5.2^{2n - 2} + 4.2^{n - 1} + 12}{2^{n - 1} - 1}
\] and
\[
LE^+(\Gamma_QD_{2^n}) =  \frac{5.2^{3n - 4} - 30.2^{2n - 3} + 40.2^{n - 2}}{2^{n - 1} - 1}.
\]
\end{proposition}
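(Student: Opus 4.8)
The plan is to reduce Proposition~\ref{semid} to Theorem~\ref{main4} by recognizing the central quotient of $QD_{2^n}$ as a dihedral group and then specializing the formulas. First I would compute the center. Writing a general element as $a^i$ or $a^i b$, the element $a^i$ is central exactly when $b a^i b^{-1} = a^i$, that is $a^{i(2^{n-2}-1)} = a^i$, which forces $i(2^{n-2}-2) \equiv 0 \pmod{2^{n-1}}$; since $2^{n-3}-1$ is odd this reduces to $2i \equiv 0 \pmod{2^{n-1}}$, so $i \in \{0, 2^{n-2}\}$. No element $a^i b$ is central, as it already fails to commute with $a$. Hence $Z(QD_{2^n}) = \{1, a^{2^{n-2}}\}$ and $|Z(QD_{2^n})| = 2$.

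Next I would identify the quotient. In $\frac{QD_{2^n}}{Z(QD_{2^n})}$ the image $\bar a$ has order $2^{n-2}$, and the defining relation gives $\bar b \bar a \bar b^{-1} = \bar a^{\,2^{n-2}-1} = \bar a^{-1}$, so the quotient is dihedral of order $2\cdot 2^{n-2} = 2^{n-1}$. Thus Theorem~\ref{main4} applies with $m = 2^{n-2}$ and $|Z(G)| = 2$; equivalently $\Gamma_{QD_{2^n}} = K_{2^{n-1}-2} \sqcup 2^{n-2}K_2$, from which the adjacency, Laplacian and signless Laplacian spectra can be read off directly.

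With these substitutions the energy is immediate from part~(1): $(4m-2)|Z(G)| - 2(m+1)$ with $m = 2^{n-2}$, $|Z(G)| = 2$ and $4\cdot 2^{n-2} = 2^n$ collapses to $3(2^{n-1}-2)$. For the remaining two I would first fix the active case of Theorem~\ref{main4}. Since $n \geq 4$ forces $m = 2^{n-2} \geq 4$, the Laplacian energy lies in the single case ``$m=4$ and $|Z(G)| \geq 2$; or $m \geq 5$'' throughout; the signless Laplacian energy lies in ``$m=4$ and $|Z(G)| \leq 6$'' when $n = 4$ and in ``$m \geq 5$'' when $n \geq 5$, and I would check that these two branches collapse to one expression. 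Substituting $m = 2^{n-2}$, $|Z(G)| = 2$ into the chosen rational formulas and converting powers of $2$ (for instance $8m^3 = 2^{3n-3}$, $m^2 = 2^{2n-4}$ and $2m-1 = 2^{n-1}-1$) then produces the displayed closed forms for $LE$ and $LE^+$.

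The routine algebra is not where the difficulty lies; the delicate step is the case selection, namely confirming the signs of $\bigl||Z(G)| - \tfrac{2|e|}{|v|}\bigr|$ and $\bigl|2|Z(G)|-2 - \tfrac{2|e|}{|v|}\bigr|$ at $m = 2^{n-2}$, $|Z(G)| = 2$. Here $\frac{2|e(\Gamma_{QD_{2^n}})|}{|v(\Gamma_{QD_{2^n}})|} = \frac{2^{2n-2} - 2^{n+1} + 6}{2^{n} - 2}$ must be compared against the eigenvalues $|Z(G)| = 2$ and $(m-1)|Z(G)|-2$, and the comparison behaves differently for $n = 4$ than for $n \geq 5$. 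I would therefore verify these inequalities explicitly (in particular that the eigenvalue $2$ sits strictly below this ratio for every $n \geq 4$) before committing to one formula per energy, since it is exactly these sign determinations that control the final constants.
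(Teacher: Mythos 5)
Your reduction is a genuinely different route from the paper's: the paper works directly from the spectra quoted from \cite{Dutta16} and \cite{dn016} and sums the absolute values by hand, whereas you pass through Theorem~\ref{main4} after showing $Z(QD_{2^n})=\{1,a^{2^{n-2}}\}$ and $QD_{2^n}/Z(QD_{2^n})\cong D_{2^{n-1}}$, i.e.\ $m=2^{n-2}$, $|Z(G)|=2$. That group-theoretic reduction is correct (both routes give $\Gamma_{QD_{2^n}}=K_{2^{n-1}-2}\sqcup 2^{n-2}K_2$ and the same three spectra), your case selection for $LE$ (always case~(3) since $m\geq 4$ and $|Z(G)|=2$) is right, and the energy computation checks out. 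What each approach buys: yours makes the $QD_{2^n}$ result a one-line corollary and, usefully, forces consistency with Corollary~\ref{main005} for $D_{2^n}$, which has the identical commuting graph.

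The genuine gap is the last step, which you assert rather than perform: the substitution does \emph{not} produce the displayed closed forms. Case~(3) of Theorem~\ref{main4} at $|Z(G)|=2$ gives $\frac{8m^3-20m^2+8m+6}{2m-1}=\frac{2^{3n-3}-5\cdot 2^{2n-2}+4\cdot 2^{n-1}+6}{2^{n-1}-1}$, with constant $+6$, not the stated $+12$; at $n=4$ this is $\frac{230}{7}$ (exactly what the paper obtains for $D_{16}$, same graph $K_6\sqcup 4K_2$), while the Proposition claims $\frac{236}{7}$. For $LE^+$ the mismatch is worse: case~(7) at $n=4$ gives $\frac{48\cdot 4}{7}=\frac{192}{7}$, whereas the stated formula evaluates to $\frac{480}{7}$; and for $n\geq 5$ you would be substituting into the printed case~(9) formula, which is itself inconsistent with the sum of the four terms in its own derivation (at $m=8$, $|Z(G)|=2$ it yields $185$ while the actual sum, and a direct check on $K_{14}\sqcup 8K_2$, gives $\frac{2688}{15}$). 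The correct common value of cases (7) and (9) here is $\frac{(2m^3-6m^2+4m)|Z(G)|^2}{2m-1}=\frac{2^{3n-3}-6\cdot 2^{2n-2}+8\cdot 2^{n-1}}{2^{n-1}-1}$, which again is not the displayed expression. So your method is sound and in fact exposes arithmetic errors in the statement (and in Theorem~\ref{main4}(9)), but as a proof of the Proposition \emph{as stated} it cannot be completed; you need to carry out the final algebra explicitly and either correct the target formulas or flag the discrepancy.
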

\begin{proof}
By \cite[Proposition 1]{Dutta16}, we have
\[
\spec(\Gamma_{QD_{2^n}}) = \{(-1)^{2^{n}  - 2^{n - 2} - 3}, 1^{2^{n - 2}}, (2^{n - 1} - 3)^1\}.
\]
Therefore, by \eqref{energy} we have
\[
E(\Gamma_{QD_{2^n}}) = 3(2^{n - 1} - 2).
\]
Note  that $|v(\Gamma_{QD_{2^n}})| = 2^{n - 1} - 1$ and $|e(\Gamma_{QD_{2^n}})| = \frac{2^{2n - 2} - 4.2^{n - 1} + 6}{2}$ since $\Gamma_G =2^{n - 2} K_2 \sqcup  K_{2^{n - 1} - 2} $. Therefore,
\[
\frac{2|e(\Gamma_{QD_{2^n}})|}{|v(\Gamma_{QD_{2^n}})|} = \frac{2^{2n - 2} - 4.2^{n - 1} + 6}{2(2^{n - 1} - 1)}.
\]

By \cite[Proposition 2.10]{dn016}, we have
\[
\L-spec(\Gamma_{QD_{2^n}}) =  \{0^{2^{n - 2} + 1}, (2^{n - 1} - 2)^{2^{n - 1}-3}, 2^{2^{n - 2}}\}.
\]
Therefore, $\left|0 - \frac{2|e(\Gamma_{QD_{2^n}})|}{|v(\Gamma_{QD_{2^n}})|}\right| = \frac{2^{2n - 2} - 4.2^{n - 1} + 6}{2(2^{n - 1} - 1)}$, $\left|2^{n - 1} - 2 - \frac{2|e(\Gamma_{QD_{2^n}})|}{|v(\Gamma_G)|}\right| = \frac{2^{2n - 2} - 2.2^{n - 1} - 2}{2(2^{n - 1} - 1)}$ and  \\ $\left|2 - \frac{2|e(\Gamma_{QD_{2^n}})|}{|v(\Gamma_G)|}\right| = \frac{2^{2n - 2} - 8.2^{n - 1} + 10}{2(2^{n - 1} - 1)}$.
Hence, by \eqref{Lenergy} we have
\[
LE(\Gamma_{QD_{2^n}}) = \frac{2^{3n - 3} - 5.2^{2n - 2} + 4.2^{n - 1} + 12}{2^{n - 1} - 1}.
\]

By \cite[Proposition 2.10]{dn016}, we also have
\[
\Q-spec(\Gamma_{QD_{2^n}}) =  \{(2^n - 6)^1, (2^{n - 1} - 4)^{{2^{n - 1} - 3}}, 2^{2^{n - 2}}, 0^{2^{n - 2}}\}.
\]
Therefore, $\left|2^n - 6 - \frac{2|e(\Gamma_{QD_{2^n}})|}{|v(\Gamma_{QD_{2^n}})|}\right| = \frac{3.2^{2n - 2} - 12.2^{n - 1} + 6}{2(2^{n - 1} - 1)}$, $\left|2^{n - 1} - 4 - \frac{2|e(\Gamma_{QD_{2^n}})|}{|v(\Gamma_{QD_{2^n}})|}\right| = \frac{2^{2n - 2} - 6.2^{n - 1} + 2}{2(2^{n - 1} - 1)}$,\\ $\left|2  - \frac{2|e(\Gamma_{QD_{2^n}})|}{|v(\Gamma_{QD_{2^n}})|}\right| = \frac{2^{2n - 2} - 8.2^{n - 1} + 10}{2(2^{n - 1} - 1)}$ and $\left|0 - \frac{2|e(\Gamma_{QD_{2^n}})|}{|v(\Gamma_{QD_{2^n}})|}\right| = \frac{2^{2n - 2} - 4.2^{n - 1} + 6}{2(2^{n - 1} - 1)}$.
Therefore, by \eqref{Qenergy} we have
\begin{align*}
  LE^+(\Gamma_{QD_{2^n}}) = & \frac{3.2^{2n - 2} - 12.2^{n - 1} + 6}{2(2^{n - 1} - 1)} + \frac{(2^{n - 1} - 3)(2^{2n - 2} - 6.2^{n - 1} + 2}{2(2^{n - 1} - 1)}  \\
  + & + \frac{2^{n - 2}(2^{2n - 2} - 8.2^{n - 1} + 10}{2(2^{n - 1} - 1)} + + \frac{2^{n - 2}(2^{2n - 2} - 4.2^{n - 1} + 6}{2(2^{n - 1} - 1)}\\
  & =  \frac{5.2^{3n - 4} - 30.2^{2n - 3} + 40.2^{n - 2}}{2^{n - 1} - 1}.
\end{align*}
\end{proof}

\begin{proposition}\label{psl}
Let $G$ denote the projective special linear group  $PSL(2, 2^k)$, where $k \geq 2$. Then
\[
E(\Gamma_G) =  2^{3k + 1} - 2^{2k + 1} - 2^{k + 2} - 4,
\]

\[
LE(\Gamma_G) =\frac{2.2^{6k} - 2.2^{5k} - 8.2^{4k} - 6.2^{3k} + 6.2^{2k} + 8.2^k + 4}{2^{3k} - 2^k - 1},
\]
and

\[
LE^+(\Gamma_G) =\begin{cases}
  \frac{2^{6k} + 2^{5k} - 3.2^{4k} - 7.2^{3k} + 4.2^k + 4}{2^{3k} - 2^k - 1}, & \mbox{if k=2}; \\
  \frac{2.2^{6k} - 2.2^{5k} - 8.2^{4k} - 6.2^{3k} + 6.2^{2k} + 8.2^k + 4}{2^{3k} - 2^k - 1}, & \mbox{otherwise}.
\end{cases}
\]
\end{proposition}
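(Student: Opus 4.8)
The plan is to follow the template established in the preceding proofs, since for $G=PSL(2,2^k)$ the commuting graph $\Gamma_G$ is once again a disjoint union of complete graphs. Writing $q=2^k$, the center of $G$ is trivial, and its maximal abelian (equivalently, self-centralizing) subgroups fall into three families of mutually trivial-intersection subgroups: the $q+1$ Sylow $2$-subgroups (elementary abelian of order $q$), the $\frac{q(q+1)}{2}$ split tori (cyclic of order $q-1$), and the $\frac{q(q-1)}{2}$ nonsplit tori (cyclic of order $q+1$). Deleting the identity from each gives
\[
\Gamma_G=(q+1)K_{q-1}\sqcup\frac{q(q+1)}{2}K_{q-2}\sqcup\frac{q(q-1)}{2}K_{q}.
\]
First I would quote the adjacency spectrum of this graph from the relevant computation in \cite{Dutta16}; since each $K_n$ has spectrum $\{(n-1)^1,(-1)^{n-1}\}$, the distinct nonnegative eigenvalues are $q-3,\,q-2,\,q-1$ with multiplicities $\frac{q(q+1)}{2},\,q+1,\,\frac{q(q-1)}{2}$ respectively, and $-1$ with the remaining multiplicity. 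Then $E(\Gamma_G)$ follows from \eqref{energy}; the quickest route is the identity $E=2\bigl(|v(\Gamma_G)|-c\bigr)$, valid for a disjoint union of $c$ cliques, which with $|v(\Gamma_G)|=q^3-q-1$ and $c=q^2+q+1$ yields $E(\Gamma_G)=2^{3k+1}-2^{2k+1}-2^{k+2}-4$.

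Next I would compute $|v(\Gamma_G)|=q^3-q-1$ and $|e(\Gamma_G)|=\sum_i\binom{n_i}{2}$ from the clique decomposition and reduce the average degree; a short polynomial division gives
\[
\frac{2|e(\Gamma_G)|}{|v(\Gamma_G)|}=(q-2)+\frac{q}{q^3-q-1},
\]
so the average degree sits just above $q-2$, with the positive correction $\frac{q}{q^3-q-1}<1$. As in the earlier proofs I would record a single identity for $(\text{integer eigenvalue})-\frac{2|e(\Gamma_G)|}{|v(\Gamma_G)|}$ over the common denominator $q^3-q-1$ to mechanize every absolute value. Quoting the Laplacian spectrum from the appropriate part of \cite{dn016} --- each $K_n$ contributing $0^1$ and $n^{n-1}$, so the nonzero Laplacian eigenvalues are $q-2,\,q-1,\,q$ --- and comparing with the average degree, one checks that the signs are fixed for all $k\ge2$ (because the correction term lies strictly between $0$ and $1$). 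Plugging into \eqref{Lenergy} then gives the single closed form for $LE(\Gamma_G)$.

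The signless Laplacian is where the genuine case analysis enters. Each $K_n$ contributes $Q$-eigenvalues $(2n-2)^1$ and $(n-2)^{n-1}$, so the six distinct $Q$-eigenvalues are $2q-6,\,2q-4,\,2q-2$ (the large ones, one per clique family) and $q-4,\,q-3,\,q-2$ (the small ones). Comparing each with the average degree $(q-2)+\frac{q}{q^3-q-1}$, the three small eigenvalues always lie below it while $2q-4$ and $2q-2$ always lie above it; the delicate eigenvalue is $2q-6=2^{k+1}-6$, coming from the split-torus cliques $K_{q-2}$. Since
\[
2q-6-\Bigl((q-2)+\frac{q}{q^3-q-1}\Bigr)=(q-4)-\frac{q}{q^3-q-1},
\]
this quantity is negative precisely when $q=4$ (that is, $k=2$) and positive for $q\ge8$ (that is, $k\ge3$). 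The sign change of this single term, of multiplicity $\frac{q(q+1)}{2}$, is exactly what produces the two-case formula for $LE^+(\Gamma_G)$ via \eqref{Qenergy}. The main obstacle is therefore not the algebra of summing but pinning down this crossover: I would solve the inequality $2^{k+1}-6\gtrless\frac{2|e(\Gamma_G)|}{|v(\Gamma_G)|}$ exactly, confirm that the threshold falls strictly between $k=2$ and $k=3$, and verify that no other eigenvalue changes sign there, so that the $k=2$ answer differs from the generic one only through this one flipped term.
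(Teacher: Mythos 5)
Your route is essentially the paper's own: both arguments rest on the clique decomposition $\Gamma_G=(q+1)K_{q-1}\sqcup\frac{q(q+1)}{2}K_{q-2}\sqcup\frac{q(q-1)}{2}K_{q}$ (writing $q=2^k$), quote the adjacency, Laplacian and signless Laplacian spectra from \cite{Dutta16} and \cite{dn016}, compare each eigenvalue with $\frac{2|e(\Gamma_G)|}{|v(\Gamma_G)|}$ over the common denominator $2^{3k}-2^k-1$, and isolate $2^{k+1}-6$ as the unique eigenvalue whose sign relative to the average degree flips between $k=2$ and $k\geq 3$; your shortcuts ($E(\Gamma_G)=2(|v(\Gamma_G)|-c)$ for a union of $c$ cliques, and the average degree written as $(q-2)+\frac{q}{q^3-q-1}$ with the correction strictly between $0$ and $1$) streamline the sign analysis but do not change the argument. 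One caveat: carrying your $LE$ computation through gives $\frac{2\cdot 2^{6k}-2\cdot 2^{5k}-3\cdot 2^{4k}-4\cdot 2^{3k}+3\cdot 2^{2k}+8\cdot 2^k+4}{2^{3k}-2^k-1}$, which is exactly what the paper's own proof ends with but is \emph{not} the numerator displayed in the proposition (the stated $LE$ numerator coincides with the ``otherwise'' case of $LE^+$ and looks like a transcription error), so your closing claim that the sum ``gives the single closed form'' should be understood as producing the former expression rather than the one in the statement.
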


\begin{proof}
By \cite[Proposition 2]{Dutta16}, we have
\[
\spec(\Gamma_G) =  \{(-1)^{2^{3k} - 2^{2k} - 2^{k + 1} - 2}, (2^k - 1)^{2^{k - 1}(2^k - 1)}, (2^k - 2)^{2^k + 1}, (2^k - 3)^{2^{k - 1}(2^k + 1)}\}.
\]
Therefore, by \eqref{energy} we have
\[
E(\Gamma_G) = 2^{3k + 1} - 2^{2k + 1} - 2^{k + 2} - 4.
\]
Note  that $|v(\Gamma_G)| = 2^{3k} - 2^k - 1$ and $|e(\Gamma_G)| = \frac{2^{4k} - 2.2^{3k} - 2^{2k} + 2.2^k + 2}{2}$ since $\Gamma_G =  (2^k + 1)K_{2^k - 1}$ $\sqcup 2^{k - 1}(2^k + 1)K_{2^k - 2} \sqcup 2^{k - 1}(2^k - 1)K_{2^k}$. Therefore,
\[
\frac{2|e(\Gamma_G)|}{|v(\Gamma_G)|} = \frac{2^{4k} - 2.2^{3k} - 2^{2k} + 2.2^k + 2}{2^{3k} - 2^k - 1}.
\]

By \cite[Proposition 2.11]{dn016}, we have
\[
\L-spec(\Gamma_G) =   \{0^{2^{2k} + 2^k + 1}, (2^k - 1)^{2^{2k} - 2^k - 2}, (2^k - 2)^{2^{k - 1}(2^{2k} - 2^{k + 1} - 3)}, (2^k)^{2^{k - 1}(2^{2k} - 2^{k + 1} + 1)}\}.
\]
Therefore, $\left|0  - \frac{2|e(\Gamma_G)|}{|v(\Gamma_G)|}\right| = \frac{2^{4k} - 2.2^{3k} - 2^{2k} + 2.2^k + 2}{2^{3k} - 2^k - 1}$, $\left|2^k - 1  - \frac{2|e(\Gamma_G)|}{|v(\Gamma_G)|}\right| = \frac{2^{3k} - 2.2^k - 1}{2^{3k} - 2^k - 1}$, $\left|2^k - 2  - \frac{2|e(\Gamma_G)|}{|v(\Gamma_G)|}\right|  =$\\  $\frac{2^k}{2^{3k} - 2^k - 1}$ and $\left|2^k  - \frac{2|e(\Gamma_G)|}{|v(\Gamma_G)|}\right| = \frac{2.2^{3k} - 3.2^k - 2}{2^{3k} - 2^k - 1}$.
Hence, by \eqref{Lenergy} we have
\begin{align*}
  LE(\Gamma_G)  = & \frac{(2^{2k} + 2^k + 1)(2^{4k} - 2.2^{3k} - 2^{2k} + 2.2^k + 2)}{2^{3k} - 2^k - 1} + \frac{(2^{2k} - 2^k - 2)(2^{3k} - 2.2^k - 1)}{2^{3k} - 2^k - 1} \\
   & + \frac{2^{k - 1}(2^{2k} - 2^{k + 1} - 3)2^k}{2^{3k} - 2^k - 1} +  \frac{2^{k - 1}(2^{2k} - 2^{k + 1} + 1)(2.2^{3k} - 3.2^k - 2)}{2^{3k} - 2^k - 1}  \\
  = & \frac{2.2^{6k} - 2.2^{5k} - 3.2^{4k} - 4.2^{3k} + 3.2^{2k} + 8.2^k + 4}{2^{3k} - 2^k - 1}.
\end{align*}

By \cite[Proposition 2.11]{dn016}, we also have
\begin{align*}
\Q-spec(\Gamma_G) = \{(2^{k + 1} - 4)^{2^k + 1}, (2^k - 3)^{2^{2k} - 2^k - 2}, & (2^{k + 1} - 6)^{2^{k - 1}(2^k + 1)}, (2^k - 4)^{2^{k - 1}(2^{2k} - 2^{k + 1} - 3)},\\
& (2^{k + 1} - 2)^{2^{k - 1}(2^k - 1)}, (2^k - 2)^{2^{k - 1}(2^{2k} - 2^{k + 1} + 1)}\}.
\end{align*}
Therefore, $\left|2^{k + 1} - 4  - \frac{2|e(\Gamma_G)|}{|v(\Gamma_G)|}\right| =  \frac{2^{4k} - 2.2^{3k} - 2^{2k} + 2}{2^{3k} - 2^k - 1}$, $\left|2^k - 3  - \frac{2|e(\Gamma_G)|}{|v(\Gamma_G)|}\right| =  \frac{2^{3k} - 1}{2^{3k} - 2^k - 1}$,
\[
\left|2^{k + 1} - 6   - \frac{2|e(\Gamma_G)|}{|v(\Gamma_G)|}\right| =
\begin{cases}
  \frac{- 2^{4k} + 4.2^{3k} + 2^{2k} - 2.2^k - 4}{2^{3k} - 2^k - 1}, & \mbox{if k=2 };\\
  \frac{ 2^{4k} - 4.2^{3k} - 2^{2k} + 2.2^k + 4}{2^{3k} - 2^k - 1}, & \mbox{otherwise},
\end{cases}
\]

  $\left|2^k - 4  - \frac{2|e(\Gamma_G)|}{|v(\Gamma_G)|}\right| = \frac{2.2^{3k} - 2^k - 2}{2^{3k} - 2^k - 1}$, $\left|2^{k + 1} - 2  - \frac{2|e(\Gamma_G)|}{|v(\Gamma_G)|}\right| = \frac{2^{4k} - 2^{2k} - 2.2^k}{2^{3k} - 2^k - 1}$ and $\left|2^k - 2  - \frac{2|e(\Gamma_G)|}{|v(\Gamma_G)|}\right| = \frac{2^k}{2^{3k} - 2^k - 1}$.
Therefore, by \eqref{Qenergy} we have, if $k=2$ then

\begin{align*}
  LE^+(\Gamma_G) = & \frac{(2^k + 1)(2^{4k} - 2.2^{3k} - 2^{2k} + 2)}{2^{3k} - 2^k - 1} +  \frac{(2^{2k} - 2^k - 2)(2^{3k} - 1)}{2^{3k} - 2^k - 1}\\
   & + \frac{2^{k - 1}(2^k + 1)(- 2^{4k} + 4.2^{3k} + 2^{2k} - 2.2^k - 4)}{2^{3k} - 2^k - 1} + \frac{2^{k - 1}(2^{2k} - 2^{k + 1} - 3)(2.2^{3k} - 2^k - 2)}{2^{3k} - 2^k - 1} \\
   & + \frac{2^{k - 1}(2^k - 1)(2^{4k} - 2^{2k} - 2.2^k)}{2^{3k} - 2^k - 1} +  \frac{2^{k - 1}(2^{2k} - 2^{k + 1} + 1)2^k}{2^{3k} - 2^k - 1}\\
  = & \frac{2^{6k} + 2^{5k} - 3.2^{4k} - 7.2^{3k} + 4.2^k + 4}{2^{3k} - 2^k - 1}.
\end{align*}
\noindent
Otherwise,
\begin{align*}
  LE^+(\Gamma_G) = & \frac{(2^k + 1)(2^{4k} - 2.2^{3k} - 2^{2k} + 2)}{2^{3k} - 2^k - 1} +  \frac{(2^{2k} - 2^k - 2)(2^{3k} - 1)}{2^{3k} - 2^k - 1}\\
   & + \frac{2^{k - 1}(2^k + 1)(2^{4k} - 4.2^{3k} - 2^{2k} + 2.2^k + 4)}{2^{3k} - 2^k - 1} + \frac{2^{k - 1}(2^{2k} - 2^{k + 1} - 3)(2.2^{3k} - 2^k - 2)}{2^{3k} - 2^k - 1} \\
   & + \frac{2^{k - 1}(2^k - 1)(2^{4k} - 2^{2k} - 2.2^k)}{2^{3k} - 2^k - 1} +  \frac{2^{k - 1}(2^{2k} - 2^{k + 1} + 1)2^k}{2^{3k} - 2^k - 1}\\
  = & \frac{2.2^{6k} - 2.2^{5k} - 8.2^{4k} - 6.2^{3k} + 6.2^{2k} + 8.2^k + 4}{2^{3k} - 2^k - 1}.
\end{align*}
This completes the proof.
\end{proof}

\begin{proposition}
Let $G$ denote the  general linear group  $GL(2, q)$, where $q = p^n > 2$ and $p$ is a prime. Then
\begin{align*}
E(\Gamma_G) = & \frac{2q^4 - 2q^3 - 8q^2 - 5q}{2},\\
LE(\Gamma_G) = & \frac{2q^9 - 6q^8 + 4q^7 + 8q^6 -10q^5 + 4q^3 + 4q^2 -8q}{2(q-1)(q^3 - q - 1)}  \quad \text{ and } \\
LE^+(\Gamma_G) = & \frac{q^{10} - 4q^9 + 10q^8 + 3q^7 - 23q^6 -9q^5 + 22q^4 + 10q^3 - 9q^2 -4q}{2(q-1)(q^3 - q - 1)}.
\end{align*}
\end{proposition}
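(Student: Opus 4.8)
The plan is to follow the template of the earlier results (Theorems \ref{order-20} and \ref{main4}, Propositions \ref{order-pq} and \ref{psl}): read off the three spectra of $\Gamma_G$ from the literature and substitute into \eqref{energy}, \eqref{Lenergy} and \eqref{Qenergy}. The structural input is that every non-central element of $GL(2,q)$ is regular, so its centralizer is abelian; hence two distinct non-central elements are adjacent in $\Gamma_G$ exactly when they lie in a common maximal abelian subgroup, and $\Gamma_G$ is a disjoint union of complete graphs. The maximal abelian subgroups are the split tori (order $(q-1)^2$), the non-split tori (order $q^2-1$) and the subgroups $Z(G)U$ attached to the root subgroups $U$ (order $q(q-1)$), and counting them yields
\[
\Gamma_G = \tfrac{q(q+1)}{2}\,K_{(q-1)(q-2)} \;\sqcup\; \tfrac{q(q-1)}{2}\,K_{q(q-1)} \;\sqcup\; (q+1)\,K_{(q-1)^2}.
\]
First I would record $\spec(\Gamma_G)$ and the Laplacian and signless Laplacian spectra from \cite{Dutta16} and \cite{dn016}, using that each $K_n$ contributes adjacency eigenvalues $n-1$ and $-1$ with multiplicity $n-1$, Laplacian eigenvalues $0$ and $n$ with multiplicity $n-1$, and signless Laplacian eigenvalues $2(n-1)$ and $n-2$ with multiplicity $n-1$.

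Next I would compute the invariants $|v(\Gamma_G)| = (q-1)(q^3-q-1)$ and, since the graph is a union of cliques, $2|e(\Gamma_G)| = \sum_i n_i(n_i-1)$ taken over the three clique sizes $n_i$, which gives a closed form for the mean degree $\frac{2|e(\Gamma_G)|}{|v(\Gamma_G)|}$. Exactly as in \eqref{eqsuzuki1} and \eqref{eqdihedral}, I would set up a single identity writing each eigenvalue minus this mean as one fraction over the common denominator $(q-1)(q^3-q-1)$, so that all later arithmetic stays uniform. The energy $E(\Gamma_G)$ then drops out at once by summing $|\lambda|$ over $\spec(\Gamma_G)$. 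For $LE(\Gamma_G)$ and $LE^+(\Gamma_G)$ I would evaluate $\bigl|\mu - \frac{2|e(\Gamma_G)|}{|v(\Gamma_G)|}\bigr|$ for each distinct eigenvalue $\mu$, decide its sign as a function of $q$, and form the weighted sum with the multiplicities recorded above.

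The difficulty here is computational rather than conceptual. The weighted sums are rational functions whose numerators reach degree ten in $q$, so the expansions in \eqref{Lenergy} and \eqref{Qenergy} must be carried out and collected very carefully; a single dropped term spoils the final closed form. The one step that is not purely mechanical is the sign analysis inside each absolute value: I would compare every Laplacian and signless Laplacian eigenvalue against $\frac{2|e(\Gamma_G)|}{|v(\Gamma_G)|}$ and check that, for all $q = p^n > 2$, each comparison has a fixed sign, which is what lets the answer be stated with no case split (in contrast to Propositions \ref{order-pq} and \ref{psl}, where the mean crosses an eigenvalue for small parameters). Once these signs are fixed, substituting the multiplicities and simplifying over $2(q-1)(q^3-q-1)$ yields the three stated expressions.
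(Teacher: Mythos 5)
Your plan is essentially the paper's own proof: the paper likewise takes the clique decomposition $\Gamma_G=\frac{q(q+1)}{2}K_{q^2-3q+2}\sqcup\frac{q(q-1)}{2}K_{q^2-q}\sqcup(q+1)K_{q^2-2q+1}$ (identical to yours, since $(q-1)(q-2)=q^2-3q+2$, etc.), quotes the adjacency, Laplacian and signless Laplacian spectra from \cite{Dutta16} and \cite{dn016}, computes $|v(\Gamma_G)|=(q-1)(q^3-q-1)$ and the mean degree, and then sums the absolute deviations with no case split in $q$. The only difference is that you supply the group-theoretic justification of the clique structure via maximal abelian subgroups (split tori, non-split tori, $Z(G)U$), which the paper simply asserts; otherwise the route and the arithmetic burden are the same.
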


\begin{proof}
By \cite[Proposition 3]{Dutta16}, we have
\[
\spec(\Gamma_G) =  \{(-1)^{q^4 -q^3 - 2q^2 - q}, (q^2 -3q + 1)^{\frac{q(q + 1)}{2}}, (q^2 - q - 1)^{\frac{q(q - 1)}{2}}, (q^2 - 2q)^{q + 1}\}.
\]
Therefore, by \eqref{energy} we have
\[
E(\Gamma_G) = \frac{2q^4 - 2q^3 - 8q^2 - 5q}{2}.
\]
Note  that $|v(\Gamma_G)| = (q - 1)(q^3 - q -1)$ and $|e(\Gamma_G)| = \frac{q^6 - 3q^5 + q^4 + 3q^3 - q^2 - q}{2}$ as $\Gamma_G =  \frac{q(q + 1)}{2}K_{q^2 - 3q + 2}$  $\sqcup \frac{q(q - 1)}{2}K_{q^2 - q} \sqcup (q + 1)K_{q^2 - 2q + 1}$. Therefore,
\[
\frac{2|e(\Gamma_G)|}{|v(\Gamma_G)|} = \frac{q^6 - 3q^5 + q^4 + 3q^3 - q^2 - q}{(q - 1)(q^3 - q -1)}.
\]

By \cite[Proposition 2.12]{dn016}, we have
\[
\L-spec(\Gamma_G) =   \{0^{q^2 + q + 1}, (q^2 - 3q + 2)^{\frac{q(q + 1)(q^2 - 3q + 1)}{2}}, (q^2 - q)^{\frac{q(q - 1)(q^2 - q - 1)}{2}}, (q^2 - 2q + 1)^{q(q + 1)(q - 2)}\}.
\]
Therefore, $\left|0  - \frac{2|e(\Gamma_G)|}{|v(\Gamma_G)|}\right| = \frac{q^6 - 3q^5 + q^4 + 3q^3 - q^2 - q}{(q - 1)(q^3 - q -1)}$, $\left|q^2 - 3q + 2  - \frac{2|e(\Gamma_G)|}{|v(\Gamma_G)|}\right| = \frac{q^5 - 3q^4 + 2q^3 + 2q - 2}{(q - 1)(q^3 - q -1)}$,\\ $\left|q^2 - q - \frac{2|e(\Gamma_G)|}{|v(\Gamma_G)|}\right| = \frac{q^5 - q^4 - 2q^3 + 2q^2}{(q - 1)(q^3 - q -1)}$ and $\left|q^2 - 2q + 1  - \frac{2|e(\Gamma_G)|}{|v(\Gamma_G)|}\right| = \frac{q^4 - 2q^3 + q^2 - q + 1}{(q - 1)(q^3 - q -1)}$.

\noindent
Hence, by \eqref{Lenergy} we have
\begin{align*}
  LE(\Gamma_G)  = & \frac{(q^2 + q + 1)(q^6 - 3q^5 + q^4 + 3q^3 - q^2 - q)}{(q - 1)(q^3 - q -1)} +  \left(\frac{q(q + 1)(q^2 - 3q + 1)}{2}\right)\frac{q^5 - 3q^4 + 2q^3 + 2q - 2}{(q - 1)(q^3 - q -1)}\\
   & + \left(\frac{q(q - 1)(q^2 - q - 1)}{2}\right)\frac{q^5 - q^4 - 2q^3 + 2q^2}{(q - 1)(q^3 - q -1)} + \frac{q(q + 1)(q - 2)(q^4 - 2q^3 + q^2 - q + 1)}{(q - 1)(q^3 - q -1)}\\
   = & \frac{2q^9 - 6q^8 + 4q^7 + 8q^6 -10q^5 + 4q^3 + 4q^2 -8q}{2(q-1)(q^3 - q - 1)}.
\end{align*}

By \cite[Proposition 2.12]{dn016}, we also have
\begin{align*}
\Q-spec(\Gamma_G) = &\{(2q^2 - 6q - 2)^{\frac{q(q + 1)}{2}}, (q^2 - 3q)^{\frac{q(q + 1)(q^2 - 3q + 1)}{2}}, (2q^2 - 2q - 2)^{\frac{q(q - 1)}{2}},\\
& (q^2 - q - 2)^{\frac{q(q - 1)(q^2 - q - 1)}{2}},
 (2q^2 - 4q)^{q + 1}, (q^2 + 2q -1)^{q(q + 1)(q - 2)}\}.
\end{align*}
Therefore, $\left|2q^2 - 6q - 2  - \frac{2|e(\Gamma_G)|}{|v(\Gamma_G)|}\right| = \frac{q^6 + 7q^5 - 11q^4 - 7q^3 + 5q^2 + 7q - 2}{(q - 1)(q^3 - q -1)}$, $\left|q^2 - 3q  - \frac{2|e(\Gamma_G)|}{|v(\Gamma_G)|}\right| = \frac{q^5 - q^4 + q^2 + 2q}{(q - 1)(q^3 - q -1)}$,\\ $\left|2q^2 - 2q - 2   - \frac{2|e(\Gamma_G)|}{|v(\Gamma_G)|}\right| = \frac{q^6 + 3q^5 - 3q^4 + q^3 + 6q^2 - q - 2}{(q - 1)(q^3 - q -1)}$, $\left|q^2 - q - 2  - \frac{2|e(\Gamma_G)|}{|v(\Gamma_G)|}\right|$ $= \frac{q^5 - 3q^4 + 4q^2 - 2}{(q - 1)(q^3 - q -1)}$,
\\$\left|2q^2 - 4q   - \frac{2|e(\Gamma_G)|}{|v(\Gamma_G)|}\right| = \frac{q^6 - 3q^5 + q^4 + q^3 + 3q^2 - 3q}{(q - 1)(q^3 - q -1)}$ and $\left|q^2 + 2q -1  - \frac{2|e(\Gamma_G)|}{|v(\Gamma_G)|}\right| = \frac{4q^5 - 5q^4 - 4q^3 + 3q^2 + 3q - 1}{(q - 1)(q^3 - q -1)}$.
Therefore, by \eqref{Qenergy} we have
\begin{align*}
  LE^+(\Gamma_G) = & \left(\frac{q(q + 1)}{2}\right)\frac{q^6 + 7q^5 - 11q^4 - 7q^3 + 5q^2 + 7q - 2}{(q - 1)(q^3 - q -1)} + \left(\frac{q(q + 1)(q^2 - 3q + 1)}{2}\right)\frac{q^5 - q^4 + q^2 + 2q}{(q - 1)(q^3 - q -1)} \\
  & + \left(\frac{q(q - 1)}{2}\right)\frac{q^6 + 3q^5 - 3q^4 + q^3 + 6q^2 - q - 2}{(q - 1)(q^3 - q -1)} + \left(\frac{q(q - 1)(q^2 - q - 1)}{2}\right)\frac{q^5 - 3q^4 + 4q^2 - 2}{(q - 1)(q^3 - q -1)} \\
  & + \frac{(q + 1)(q^6 - 3q^5 + q^4 + q^3 + 3q^2 - 3q)}{(q - 1)(q^3 - q -1)} + \frac{q(q + 1)(q - 2)(4q^5 - 5q^4 - 4q^3 + 3q^2 + 3q - 1)}{(q - 1)(q^3 - q -1)} \\
  = & \frac{q^{10} - 4q^9 + 10q^8 + 3q^7 - 23q^6 -9q^5 + 22q^4 + 10q^3 - 9q^2 -4q}{2(q-1)(q^3 - q - 1)}.
\end{align*}
\end{proof}

\begin{proposition}\label{Hanaki1}
Let $F = GF(2^n), n \geq 2$ and $\vartheta$ be the Frobenius  automorphism of $F$, i. e., $\vartheta(x) = x^2$ for all $x \in F$. If $G$ denotes the group
\[
 \left\lbrace U(a, b) = \begin{bmatrix}
        1 & 0 & 0\\
        a & 1 & 0\\
        b & \vartheta(a) & 1
       \end{bmatrix} : a, b \in F \right\rbrace
\]
under matrix multiplication given by $U(a, b)U(a', b') = U(a + a', b + b' + a'\vartheta(a))$, then
\[
E(\Gamma_G) = LE(\Gamma_G) =  LE^+(\Gamma_G) = 2{(2^n - 1)}^2.
\]
\end{proposition}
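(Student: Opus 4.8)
The plan is to reduce everything to the structure of the commuting graph $\Gamma_G$, which I claim is a disjoint union of equally sized complete graphs; once this is established all three energies fall out of a single short computation. So the first thing I would do is pin down $Z(G)$ and then the adjacency rule, since these two facts determine $\Gamma_G$ completely.

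First I would compute the center from the given multiplication $U(a,b)U(a',b') = U(a+a',\, b+b'+a'\vartheta(a))$. Comparing $U(a,b)U(a',b')$ with $U(a',b')U(a,b)$, the first coordinates always agree, and the second coordinates agree for all $(a',b')$ exactly when $a'\vartheta(a) = a\vartheta(a')$ for every $a'\in F$. Since $\vartheta(x)=x^2$ and $\operatorname{char}F = 2$, this is $a'a^2 + a(a')^2 = aa'(a+a') = 0$ for all $a'$, which forces $a = 0$. Hence $Z(G) = \{U(0,b) : b\in F\}$, so $|Z(G)| = 2^n$ and $|G\setminus Z(G)| = 2^n(2^n-1)$. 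The hard part of the whole argument is the adjacency rule, although it is really the same computation: for two non-central vertices $U(a,b)$ and $U(a',b')$ (so $a,a'\neq 0$), commutativity is again $aa'(a+a') = 0$, which now means $a = a'$ because $a,a'\neq 0$ and $a+a'=0\iff a=a'$ in characteristic $2$. Thus the non-central elements split into $2^n-1$ classes indexed by $a\in F\setminus\{0\}$, each of size $2^n$, with all elements inside a class commuting and none across classes, giving $\Gamma_G = (2^n-1)K_{2^n}$. I expect the characteristic-$2$ factorization $a'a^2 + a(a')^2 = aa'(a+a')$ to be the only genuine content here.

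Finally, for $\Gamma_G = (2^n-1)K_{2^n}$ I would read the three spectra off directly. Writing $N = 2^n$, the graph is $(N-1)$-regular on $N(N-1)$ vertices, so $\frac{2|e(\Gamma_G)|}{|v(\Gamma_G)|} = N-1$; using the spectra of $K_N$ together with regularity and $Q = (N-1)I + A$, its adjacency spectrum is $\{(N-1)^{N-1}, (-1)^{(N-1)^2}\}$, its Laplacian spectrum is $\{0^{N-1}, N^{(N-1)^2}\}$, and its signless Laplacian spectrum is $\{(2N-2)^{N-1}, (N-2)^{(N-1)^2}\}$. Substituting into \eqref{energy}, \eqref{Lenergy} and \eqref{Qenergy}, each sum collapses to $(N-1)^2 + (N-1)^2 = 2(N-1)^2 = 2(2^n-1)^2$. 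In fact for any disjoint union $mK_t$ all three energies equal $2m(t-1)$, and here $m = 2^n-1$, $t = 2^n$; this same principle, applied with $m = p+1$ and $t = (p-1)|Z(G)|$, is what produces the formula in Theorem \ref{main2}, so the agreement is no accident.
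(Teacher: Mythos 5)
Your proof is correct, and it reaches the same final computation as the paper but by a more self-contained route. The paper simply quotes $\spec(\Gamma_G)$, $\L-spec(\Gamma_G)$ and $\Q-spec(\Gamma_G)$ from \cite[Proposition 4]{Dutta16} and \cite[Proposition 2.13]{dn016} (using the decomposition $\Gamma_G=(2^n-1)K_{2^n}$ only to get $\frac{2|e(\Gamma_G)|}{|v(\Gamma_G)|}=2^n-1$), and then evaluates the three sums term by term. You instead derive everything from the group law: the computation $a'\vartheta(a)+a\vartheta(a')=aa'(a+a')$ in characteristic $2$ gives $Z(G)=\{U(0,b)\}$ and shows two non-central elements commute iff they share the same first coordinate, whence $\Gamma_G=(2^n-1)K_{2^n}$; the three spectra you then read off agree exactly with the ones the paper cites (note $2^{2n}-2^{n+1}+1=(2^n-1)^2$). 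What your approach buys is independence from the external references and the clean general observation that $E=LE=LE^+=2m(t-1)$ for any $mK_t$, which indeed also explains Theorem \ref{main2} and Proposition \ref{Hanaki2}; what it costs is only the short group-theoretic verification, which you carry out correctly.
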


\begin{proof}
By \cite[Proposition 4]{Dutta16}, we have
\[
\spec(\Gamma_G) =  \{(-1)^{(2^n - 1)^2}, (2^n - 1)^{2^n - 1}\}.
\]
Therefore, by \eqref{energy} we have
\[
E(\Gamma_G) = 2{(2^n - 1)}^2.
\]
Note  that $|v(\Gamma_G)| = 2^n(2^n - 1)$ and $|e(\Gamma_G)| = \frac{2^{3n} - 2^{2n + 1} + 2^n}{2}$ since $\Gamma_G =  (2^n - 1)K_{2^n}$. Therefore,
\[
\frac{2|e(\Gamma_G)|}{|v(\Gamma_G)|} = 2^n - 1.
\]

By \cite[Proposition 2.13]{dn016}, we have
\[
\L-spec(\Gamma_G) =   \{0^{2^n - 1}, (2^n)^{2^{2n} - 2^{n + 1} + 1}\}.
\]
Therefore, $\left|0  - \frac{2|e(\Gamma_G)|}{|v(\Gamma_G)|}\right| = 2^n - 1$ and $\left|2^n  - \frac{2|e(\Gamma_G)|}{|v(\Gamma_G)|}\right| = 1$.
Hence, by \eqref{Lenergy} we have

\begin{align*}
  LE(\Gamma_G) = & (2^n - 1)(2^n - 1) + (2^{2n} - 2^{n + 1} + 1)1 \\
  = & 2{(2^n - 1)}^2.
\end{align*}

By \cite[Proposition 2.13]{dn016}, we also have
\[
\Q-spec(\Gamma_G) =   \{(2^{n + 1} - 2)^{2^n - 1}, (2^n - 2)^{2^{2n} - 2^{n + 1} + 1}\}.
\]
Therefore, $\left|2^{n + 1} - 2  - \frac{2|e(\Gamma_G)|}{|v(\Gamma_G)|}\right| = 2^n - 1$ and $\left|2^n - 2  - \frac{2|e(\Gamma_G)|}{|v(\Gamma_G)|}\right| = 1$.
Therefore, by \eqref{Qenergy} we have
\begin{align*}
  LE^+(\Gamma_G) = & (2^n - 1)(2^n - 1) + (2^{2n} - 2^{n + 1} + 1)1 \\
  = & 2{(2^n - 1)}^2.
\end{align*}
\end{proof}

\begin{proposition}\label{Hanaki2}
Let $F = GF(p^n)$ where $p$ is a prime. If $G$ denotes    the  group
\[
\left\lbrace V(a, b, c) = \begin{bmatrix}
        1 & 0 & 0\\
        a & 1 & 0\\
        b & c & 1
       \end{bmatrix} : a, b, c \in F \right\rbrace
\]
under matrix multiplication $V(a, b, c)V(a', b', c') = V(a + a', b + b' + ca', c + c')$,
then
\[
E(\Gamma_G) = LE(\Gamma_G) = LE^+(\Gamma_G) = 2(p^{3n} - 2p^n - 1).
\]

\end{proposition}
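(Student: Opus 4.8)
The plan is to pin down the exact isomorphism type of the commuting graph $\Gamma_G$, which turns out to be a disjoint union of complete graphs, and then to read off $E$, $LE$ and $LE^+$ from the well-known spectra of complete graphs. First I would compute the center and the centralizers directly from the given multiplication. Since $V(a,b,c)V(a',b',c') = V(a+a',b+b'+ca',c+c')$, the elements $V(a,b,c)$ and $V(a',b',c')$ commute exactly when $ca' = c'a$. Requiring this for all $(a',c')$ forces $a=c=0$, so $Z(G) = \{V(0,b,0) : b \in F\}$ has order $p^n$ and $|G| = p^{3n}$. For a non-central vertex the identity $ca' = c'a$ says precisely that $(a,c)$ and $(a',c')$ are linearly dependent over $F$; hence, on $G \setminus Z(G)$, commuting is the equivalence relation ``$(a,c)$ and $(a',c')$ span the same $1$-dimensional $F$-subspace of $F^2$''. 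There are $p^n+1$ such subspaces; each arises as the image of an abelian subgroup of order $p^{2n}$, namely the preimage of the line under the quotient map $G \to G/Z(G) \cong F^2$, and distinct such subgroups meet only in $Z(G)$. Each contributes $p^{2n}-p^n = p^n(p^n-1)$ mutually adjacent non-central elements, with no edges between different ones, so
\[
\Gamma_G = (p^n+1)\,K_{p^n(p^n-1)}.
\]

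With this decomposition in hand the rest is bookkeeping. Writing $m = p^n(p^n-1)$, a single $K_m$ has adjacency eigenvalues $m-1$ and $-1$, Laplacian eigenvalues $m$ and $0$, and signless Laplacian eigenvalues $2m-2$ and $m-2$; taking $p^n+1$ disjoint copies multiplies the multiplicities and gives, for instance, $\spec(\Gamma_G) = \{(-1)^{p^{3n}-2p^n-1},\,(p^{2n}-p^n-1)^{p^n+1}\}$ together with the analogous Laplacian and signless Laplacian spectra. Since $\Gamma_G$ is regular of degree $m-1 = p^{2n}-p^n-1$, we have $\frac{2|e(\Gamma_G)|}{|v(\Gamma_G)|} = p^{2n}-p^n-1$. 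Substituting into \eqref{energy}, \eqref{Lenergy} and \eqref{Qenergy}, the two distinct eigenvalues in each case contribute, after shifting by the mean degree where required, one term weighted by $p^n+1$ equal to $(p^n+1)(p^{2n}-p^n-1) = p^{3n}-2p^n-1$ and one term equal to $p^{3n}-2p^n-1$; adding them yields $2(p^{3n}-2p^n-1)$ for all three energies.

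The only genuine content is the structural claim of the first paragraph: the correspondence between centralizers of non-central elements and $1$-dimensional $F$-subspaces of $F^2$, which forces the clean clique decomposition. Once that is established, the computation is routine and, in fact, identical in form to the one carried out in Theorem \ref{main2} (with the role of $p$ there played by $p^n$ and $|Z(G)| = p^n$), so I would expect no further obstacle.
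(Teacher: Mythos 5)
Your proposal is correct, and its endgame coincides with the paper's: both rest on the clique decomposition $\Gamma_G = (p^n+1)\,K_{p^{2n}-p^n}$, the fact that the graph is regular of degree $p^{2n}-p^n-1$ (so $2|e(\Gamma_G)|/|v(\Gamma_G)|$ equals that common degree), and the resulting two-term sums, each equal to $p^{3n}-2p^n-1$, for all three energies. The difference lies in where the inputs come from. The paper quotes the adjacency, Laplacian and signless Laplacian spectra wholesale from its references (Proposition~5 of the spectrum paper and Proposition~2.14 of the super integral paper) and uses the clique decomposition only to count edges; you instead derive the decomposition directly from the multiplication law --- the commuting criterion $ca'=c'a$, the identification $Z(G)=\{V(0,b,0):b\in F\}$ of order $p^n$, and the bijection between centralizers of non-central elements and the $p^n+1$ one-dimensional subspaces of $F^2$ --- and then read off all three spectra from the structure of a disjoint union of complete graphs. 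Your route is self-contained and explains \emph{why} the quoted spectra hold, at the cost of reproving what the paper outsources; every numerical claim you make (multiplicities, vertex count $p^n(p^{2n}-1)$, and the final value $2(p^{3n}-2p^n-1)$) checks out and agrees with the paper.
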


\begin{proof}
By \cite[Proposition 5]{Dutta16}, we have
\[
\spec(\Gamma_G) =  \{(-1)^{p^{3n} -2p^{n} -  1},   (p^{2n} - p^n - 1)^{p^n + 1}\}.
\]
Therefore, by \eqref{energy} we have
\[
E(\Gamma_G) = 2(p^{3n} - 2p^n - 1).
\]
Note  that $|v(\Gamma_G)| = p^n(p^{2n} - 1)$ and $|e(\Gamma_G)| = \frac{p^{5n} - p^{4n} - 2p^{3n} + p^{2n} + p^n}{2}$ since $\Gamma_G =  (p^n + 1)K_{p^{2n} - p^n}$. Therefore,
\[
\frac{2|e(\Gamma_G)|}{|v(\Gamma_G)|} = p^{2n} - p^n - 1.
\]

By \cite[Proposition 2.14]{dn016}, we have
\[
\L-spec(\Gamma_G) =   \{0^{p^n + 1}, (p^{2n} - p^n)^{p^{3n} -2p^{n} -  1}\}.
\]
Therefore, $\left|0  - \frac{2|e(\Gamma_G)|}{|v(\Gamma_G)|}\right| = p^{2n} - p^n - 1$ and  $\left|p^{2n} - p^n  - \frac{2|e(\Gamma_G)|}{|v(\Gamma_G)|}\right| = 1$.
Hence, by \eqref{Lenergy} we have
\begin{align*}
  LE(\Gamma_G) & = (p^n + 1)(p^{2n} - p^n - 1) + (p^{3n} -2p^{n} -  1)1\\
  & =  2(p^{3n} - 2p^n - 1).
\end{align*}

By \cite[Proposition 2.14]{dn016}, we also have
\[
\Q-spec(\Gamma_G) =   \{(2p^{2n} - 2p^n - 2)^{p^n + 1}, (p^{2n} - p^n - 2)^{p^{3n} -2p^{n} -  1}\}.
\]
Therefore, $\left|2p^{2n} - 2p^n - 2  - \frac{2|e(\Gamma_G)|}{|v(\Gamma_G)|}\right| = p^{2n} - p^n - 1$  and $\left|p^{2n} - p^n - 2  - \frac{2|e(\Gamma_G)|}{|v(\Gamma_G)|}\right| = 1$.
Therefore, by \eqref{Qenergy} we have

\begin{align*}
  LE^+(\Gamma_G) & = (p^n + 1)(p^{2n} - p^n - 1) + (p^{3n} -2p^{n} -  1)1\\
  & =  2(p^{3n} - 2p^n - 1).
\end{align*}
\end{proof}

\section{Some consequences}

For a finite group $G$, the set $C_G(x) = \{y \in G : xy = yx\}$ is called the centralizer of an element $x \in G$. Let $|\cent(G)| = |\{C_G(x) : x \in G\}|$, that is the number of distinct centralizers in $G$. A group $G$ is called an $n$-centralizer group if $|\cent(G)| = n$. The study of these groups was initiated by  Belcastro and  Sherman   \cite{bG94} in the year 1994. The readers may conf. \cite{Dutta10} for various results on these groups. In this section, we compute various energies of the commuting graphs of non-abelian  $n$-centralizer finite groups for some positive integer $n$.   We begin with the following result.

\begin{theorem}\label{4-cent}
If $G$ is a finite $4$-centralizer group, then
\[
E(\Gamma_G) =  LE(\Gamma_G) = LE^+(\Gamma_G) = 6|Z(G)| - 6.
\]

\end{theorem}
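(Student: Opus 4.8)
The plan is to reduce the statement to Theorem \ref{main2} by invoking the structural characterization of $4$-centralizer groups. The key input is the classical result of Belcastro and Sherman \cite{bG94}: a finite group $G$ satisfies $|\cent(G)| = 4$ if and only if its central quotient $\frac{G}{Z(G)}$ is isomorphic to ${\mathbb{Z}}_2 \times {\mathbb{Z}}_2$. First I would recall this characterization, thereby translating the purely combinatorial hypothesis ``$G$ is a $4$-centralizer group'' into the group-theoretic condition on the central factor that our earlier computation requires. Note in passing that this also shows a $4$-centralizer group is necessarily non-abelian (an abelian group has $|\cent(G)| = 1$), so that the commuting graph $\Gamma_G$ is well defined.

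With that translation in hand, the proof is immediate: I would apply Theorem \ref{main2} with the prime $p = 2$. That theorem gives, for any finite group with $\frac{G}{Z(G)} \cong {\mathbb{Z}}_p \times {\mathbb{Z}}_p$,
\[
E(\Gamma_G) = LE(\Gamma_G) = LE^+(\Gamma_G) = 2(p^2 - 1)|Z(G)| - 2(p + 1).
\]
Substituting $p = 2$ collapses the right-hand side to $2 \cdot 3 \cdot |Z(G)| - 2 \cdot 3 = 6|Z(G)| - 6$, which is exactly the claimed common value of the three energies.

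The only genuine step is the first one, namely correctly citing (or re-deriving) that the $4$-centralizer groups are precisely those with $\frac{G}{Z(G)} \cong {\mathbb{Z}}_2 \times {\mathbb{Z}}_2$; the remaining algebra is a one-line specialization, so I anticipate no obstacle there, and the correctness of the final formula is guaranteed by Theorem \ref{main2}. If one wished to be self-contained rather than quoting \cite{bG94}, the characterization can be recovered by observing that $|\cent(G)| \geq 4$ forces $\frac{G}{Z(G)}$ to be non-cyclic, and then analysing the covering of $\frac{G}{Z(G)}$ by the images of proper centralizers to see that equality $|\cent(G)| = 4$ occurs exactly when $\frac{G}{Z(G)}$ is the Klein four-group; I would, however, simply cite \cite{bG94} here and defer the structural details.
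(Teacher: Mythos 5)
Your proposal is correct and follows exactly the paper's own argument: cite \cite[Theorem 2]{bG94} to get $\frac{G}{Z(G)} \cong {\mathbb{Z}}_2 \times {\mathbb{Z}}_2$, then specialize Theorem \ref{main2} to $p = 2$ to obtain $6|Z(G)| - 6$. The extra remarks on non-abelianness and a possible self-contained derivation are fine but not needed.
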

\begin{proof}
Let $G$ be a finite $4$-centralizer group. Then, by  \cite[Theorem 2]{bG94}, we have  $\frac{G}{Z(G)} \cong {\mathbb{Z}}_2 \times {\mathbb{Z}}_2$. Therefore, by Theorem \ref{main2}, the result follows.
\end{proof}

\noindent Further, we have the following result.

\begin{corollary}
If $G$ is a  finite $(p+2)$-centralizer $p$-group for any prime $p$,  then
\[
E(\Gamma_G) =  LE(\Gamma_G) = LE^+(\Gamma_G) = 2(p^2 - 1)|Z(G)| - 2(p + 1).
\]
\end{corollary}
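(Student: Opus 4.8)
The plan is to reduce the corollary entirely to Theorem~\ref{main2} by proving the group-theoretic fact that a finite $(p+2)$-centralizer $p$-group $G$ satisfies $\frac{G}{Z(G)} \cong \mathbb{Z}_p \times \mathbb{Z}_p$. This mirrors Theorem~\ref{4-cent}, which is the case $p=2$: there the Belcastro--Sherman theorem \cite[Theorem 2]{bG94} supplies $\frac{G}{Z(G)} \cong \mathbb{Z}_2 \times \mathbb{Z}_2$, and Theorem~\ref{main2} then yields $6|Z(G)|-6 = 2(2^2-1)|Z(G)| - 2(2+1)$. For general $p$ I would invoke the corresponding characterization of $(p+2)$-centralizer $p$-groups from the literature on $|\cent(G)|$ surveyed in \cite{Dutta10}; once that structural statement is available the conclusion is immediate, since Theorem~\ref{main2} asserts $E(\Gamma_G) = LE(\Gamma_G) = LE^+(\Gamma_G) = 2(p^2-1)|Z(G)| - 2(p+1)$ under exactly the hypothesis $\frac{G}{Z(G)} \cong \mathbb{Z}_p \times \mathbb{Z}_p$.

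One direction of this characterization is a short count that fixes the number $p+2$. If $\frac{G}{Z(G)} \cong \mathbb{Z}_p \times \mathbb{Z}_p$ then for each non-central $x$ the image $\overline{x}$ has order $p$, and the only proper subgroup of $\mathbb{Z}_p \times \mathbb{Z}_p$ containing $\langle \overline{x} \rangle$ is $\langle \overline{x} \rangle$ itself; hence $C_G(x) = \langle x, Z(G) \rangle$ and $C_G(x)/Z(G) = \langle \overline{x} \rangle$. The proper centralizers are thus in bijection with the $p+1$ subgroups of order $p$ in $\mathbb{Z}_p \times \mathbb{Z}_p$, and adjoining $C_G(z) = G$ for central $z$ gives $|\cent(G)| = p+2$. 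This explains why the hypothesis is exactly $(p+2)$-centralizer and confirms the consistency of the two statements.

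The substantive content, and the step I expect to be the main obstacle, is the converse implication that a $(p+2)$-centralizer $p$-group is forced to have central quotient of order $p^2$. A bare covering argument does not suffice here: the $p+1$ proper centralizers do cover $G$ and descend to a cover of $\frac{G}{Z(G)}$ by $p+1$ proper subgroups, but every non-cyclic $p$-group admits such a cover, so the covering number alone cannot exclude quotients of order larger than $p^2$. What is really needed is the finer classification controlling how the non-central elements partition among the proper centralizers and forcing $|G/Z(G)| = p^2$; the cleanest route, matching the $p=2$ treatment in Theorem~\ref{4-cent}, is to cite this classification rather than reprove it, after which Theorem~\ref{main2} delivers the stated common value with no further computation.
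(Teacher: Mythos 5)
Your proposal is correct and follows essentially the same route as the paper: the paper likewise reduces everything to Theorem~\ref{main2} by citing the structural fact that a finite $(p+2)$-centralizer $p$-group has $\frac{G}{Z(G)} \cong \mathbb{Z}_p \times \mathbb{Z}_p$, the precise reference being Ashrafi's Lemma~2.7 in \cite{ali00}. Your added count showing the forward direction and your caution that the converse cannot be obtained from a bare covering argument are both sound, but neither is needed once that lemma is invoked.
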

\begin{proof}
Let $G$ be a finite $(p + 2)$-centralizer $p$-group. Then, by   \cite[Lemma 2.7]{ali00}, we have  $\frac{G}{Z(G)} \cong {\mathbb{Z}}_p \times {\mathbb{Z}}_p$. Therefore, by Theorem \ref{main2}, the result follows.
\end{proof}

\begin{theorem}\label{5-cent}
If $G$ is a  finite $5$-centralizer  group, then

\[
E(\Gamma_G) =  LE(\Gamma_G) = LE^+(\Gamma_G) = 16|Z(G)| - 8.
\]

or
\[
E(\Gamma_G) = 10|Z(G)| - 8, \quad
LE(\Gamma_G) =
\begin{cases}
  \frac{56|Z(G)| - 40}{5}, & \mbox{if m = 3 and Z(G) = 1,2}; \\
  \frac{12|Z(G)|^2 + 11|Z(G)| - 10}{5}, & \mbox{otherwise}
\end{cases}
\]
and
\[
LE^+(\Gamma_G) =
\begin{cases}
  \frac{16}{5}, & \mbox{if m = 3 and Z(G) = 1}; \\
  \frac{12|Z(G)|^2 + 18|Z(G)| - 30}{5}, & \mbox{otherwise}.
\end{cases}
\]

\end{theorem}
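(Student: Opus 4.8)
The plan is to deduce both cases of the theorem from the structural classification of finite $5$-centralizer groups together with the two results already established above. The crucial input, analogous to the $4$-centralizer classification \cite[Theorem 2]{bG94} invoked in Theorem \ref{4-cent}, is that a finite $5$-centralizer group $G$ satisfies either $\frac{G}{Z(G)} \cong {\mathbb{Z}}_3 \times {\mathbb{Z}}_3$ or $\frac{G}{Z(G)} \cong D_6$ (equivalently $S_3$). First I would cite this classification and split the argument according to these two isomorphism types of the central quotient; everything else is substitution into Theorems \ref{main2} and \ref{main4}.

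In the first case, where $\frac{G}{Z(G)} \cong {\mathbb{Z}}_3 \times {\mathbb{Z}}_3$, I would apply Theorem \ref{main2} with $p = 3$. This directly yields
\[
E(\Gamma_G) = LE(\Gamma_G) = LE^+(\Gamma_G) = 2(3^2 - 1)|Z(G)| - 2(3 + 1) = 16|Z(G)| - 8,
\]
which is the first displayed conclusion. No subdivision on $|Z(G)|$ arises here, because for an elementary abelian central quotient the three energies coincide, as already recorded in Theorem \ref{main2}.

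In the second case, where $\frac{G}{Z(G)} \cong D_6$, I would specialise Theorem \ref{main4} to $m = 3$. The energy formula of part (1) gives $E(\Gamma_G) = (4\cdot 3 - 2)|Z(G)| - 2(3+1) = 10|Z(G)| - 8$. For the Laplacian energy I would read off parts (2) and (3) at $m = 3$: part (2) governs $|Z(G)| = 1,2$ and produces $\frac{56|Z(G)| - 40}{5}$, while part (3) governs $|Z(G)| \geq 3$. Similarly, for the signless Laplacian energy I would invoke parts (5) and (6) at $m = 3$, obtaining $\frac{16}{5}$ when $|Z(G)| = 1$ and $\frac{12|Z(G)|^2 + 18|Z(G)| - 30}{5}$ when $|Z(G)| \geq 2$, exactly matching the stated piecewise expressions.

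The only genuine obstacle is the classification step itself, namely securing the fact that a finite $5$-centralizer group has central quotient ${\mathbb{Z}}_3 \times {\mathbb{Z}}_3$ or $S_3$; once this is available the proof is mechanical. Accordingly, the remaining care is arithmetic: I would verify the $m=3$ specialisations of the coefficients in Theorem \ref{main4}, in particular confirming that the linear coefficient in the quadratic Laplacian-energy branch equals $\left.\frac{2m^2 - 2m + 2}{2m - 1}\right|_{m=3} = \frac{14}{5}$ and that the constant is $\frac{-4m + 2}{2m-1}\big|_{m=3} = -2$, so that the boundaries of the cases $|Z(G)| = 1,2$ versus $|Z(G)| \geq 3$ correspond precisely to the hypotheses of parts (2) and (3).
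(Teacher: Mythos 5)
Your proposal is essentially identical to the paper's proof: it cites the Belcastro--Sherman classification \cite[Theorem 4]{bG94} to get $\frac{G}{Z(G)} \cong {\mathbb{Z}}_3 \times {\mathbb{Z}}_3$ or $D_6$ and then substitutes into Theorems \ref{main2} and \ref{main4} exactly as the paper does. Your arithmetic check that the linear coefficient in the quadratic Laplacian-energy branch is $\frac{2m^2-2m+2}{2m-1}\big|_{m=3} = \frac{14}{5}$ is correct and in fact exposes a typo in the stated theorem (and in the paper's proof, which copies it), where that coefficient appears as $11$ instead of $14$; the value $14$ is consistent with Corollary \ref{main05}.
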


\begin{proof}
Let $G$ be a finite $5$-centralizer group. Then by  \cite[Theorem 4]{bG94}, we have  $\frac{G}{Z(G)} \cong {\mathbb{Z}}_3 \times {\mathbb{Z}}_3$ or $D_6$. Now, if $\frac{G}{Z(G)} \cong {\mathbb{Z}}_3 \times {\mathbb{Z}}_3$, then  by Theorem \ref{main2}, we have

\[
E(\Gamma_G) =  LE(\Gamma_G) = LE^+(\Gamma_G) = 16|Z(G)| - 8.
\]

If $\frac{G}{Z(G)} \cong D_6$, then by Theorem \ref{main4} we have
\[
E(\Gamma_G) = 10|Z(G)| - 8,
\quad
LE(\Gamma_G) =
\begin{cases}
  \frac{56|Z(G)| - 40}{5}, & \mbox{if m = 3 and Z(G) = 1,2}; \\
  \frac{12|Z(G)|^2 + 11|Z(G)| - 10}{5}, & \mbox{otherwise}
\end{cases}
\]
and
\[
LE^+(\Gamma_G) =
\begin{cases}
  \frac{16}{5}, & \mbox{if m = 3 and Z(G) = 1}; \\
  \frac{12|Z(G)|^2 + 18|Z(G)| - 30}{5}, & \mbox{otherwise}.
\end{cases}
\]
\noindent This completes the proof.
\end{proof}


Let $G$ be a finite group. The commutativity degree of $G$ is given by the ratio
\[
\Pr(G) = \frac{|\{(x, y) \in G \times G : xy = yx\}|}{|G|^2}.
\]
The origin of the commutativity degree of a finite group lies in a paper of Erd$\ddot{\rm o}$s and Tur$\acute{\rm a}$n (see \cite{Et68}). Readers may conf. \cite{Caste10,Dnp13,Nath08} for various results on $\Pr(G)$. In  the following few results we shall compute various energies of the commuting graphs of finite non-abelian groups $G$ such that $\Pr(G) = r$ for some rational number $r$.

\begin{theorem}
Let $G$ be a finite group and $p$ the smallest prime divisor of $|G|$. If $\Pr(G) = \frac{p^2 + p - 1}{p^3}$, then

\[
E(\Gamma_G) =  LE(\Gamma_G) = LE^+(\Gamma_G) = 2(p^2 - 1)|Z(G)| - 2(p + 1).
\]

\end{theorem}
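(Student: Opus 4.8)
The plan is to reduce this statement to Theorem~\ref{main2}. Since that theorem already computes all three energies for every group with $\frac{G}{Z(G)}\cong\mathbb{Z}_p\times\mathbb{Z}_p$, it suffices to show that the hypothesis $\Pr(G)=\frac{p^2+p-1}{p^3}$, together with $p$ being the smallest prime divisor of $|G|$, forces $\frac{G}{Z(G)}\cong\mathbb{Z}_p\times\mathbb{Z}_p$. Thus the real content is a purely group-theoretic characterization, namely the generalized analogue of the classical $5/8$ theorem (the case $p=2$).

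First I would rewrite the commutativity degree through centralizers, using
\[
\Pr(G)=\frac{1}{|G|^2}\sum_{x\in G}|C_G(x)|.
\]
Writing $n=|G|$ and $z=|Z(G)|$, I split the sum into central and non-central elements: the $z$ central elements each contribute $n$, while each non-central $x$ satisfies $z\le|C_G(x)|<n$. Because $p$ is the smallest prime dividing $n$, every proper subgroup has index at least $p$, so $[G:C_G(x)]\ge p$ and hence $|C_G(x)|\le n/p$ for every non-central $x$. This yields
\[
\Pr(G)\le\frac{zn+(n-z)\frac{n}{p}}{n^2}=\frac{1}{p}+\frac{z}{n}\cdot\frac{p-1}{p}.
\]

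Next I would bound $z/n$. Since $G$ is non-abelian, $\frac{G}{Z(G)}$ is non-cyclic; as every prime divisor of $|\frac{G}{Z(G)}|$ is at least $p$ and the smallest composite number whose prime factors are all at least $p$ is $p^2$, we get $[G:Z(G)]\ge p^2$, i.e. $\frac{z}{n}\le\frac{1}{p^2}$. Substituting gives
\[
\Pr(G)\le\frac{1}{p}+\frac{1}{p^2}\cdot\frac{p-1}{p}=\frac{p^2+p-1}{p^3}.
\]
The crucial step is the equality analysis: since the coefficient $\frac{p-1}{p}$ is positive, the monotonicity of $\frac{1}{p}+t\cdot\frac{p-1}{p}$ in $t=z/n$ shows that equality $\Pr(G)=\frac{p^2+p-1}{p^3}$ forces $t$ to attain its maximum $\frac{1}{p^2}$, that is $[G:Z(G)]=p^2$. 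A group of order $p^2$ is abelian and, being non-cyclic (as $G$ is non-abelian), must be $\mathbb{Z}_p\times\mathbb{Z}_p$; hence $\frac{G}{Z(G)}\cong\mathbb{Z}_p\times\mathbb{Z}_p$, and Theorem~\ref{main2} delivers the three claimed energies at once.

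The main obstacle I anticipate is pinning down the equality case rigorously rather than merely the inequality: one must rule out that equality could be achieved by a trade-off between a larger centre and a few non-central elements with unusually large centralizers. This is exactly what the two simultaneous estimates $[G:C_G(x)]\ge p$ (all non-central $x$) and $[G:Z(G)]\ge p^2$ prevent, together with the strict monotonicity noted above. Alternatively, this characterization of the extremal commutativity degree is already available in the literature on $\Pr(G)$, in which case it may simply be cited and the energies then follow immediately from Theorem~\ref{main2}.
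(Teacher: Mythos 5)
Your proposal is correct, and the overall reduction is exactly the paper's: show $\frac{G}{Z(G)}\cong\mathbb{Z}_p\times\mathbb{Z}_p$ and then invoke Theorem~\ref{main2}. The difference is in how that group-theoretic fact is obtained. The paper disposes of it in one line by citing MacHale's Theorem~3 from \cite{dM74} (the alternative you mention at the end), whereas you supply a complete self-contained proof of the extremal characterization: the centralizer-sum formula for $\Pr(G)$, the bound $[G:C_G(x)]\ge p$ for non-central $x$ (valid because the index is a divisor of $|G|$ exceeding $1$), the bound $[G:Z(G)]\ge p^2$ from non-cyclicity of the central quotient, and a correct equality analysis --- equality in $\Pr(G)\le\frac{1}{p}+\frac{z}{n}\cdot\frac{p-1}{p}$ combined with $\frac{z}{n}\le\frac{1}{p^2}$ forces $\frac{z}{n}=\frac{1}{p^2}$, so $G/Z(G)$ has order $p^2$ and, being non-cyclic, is $\mathbb{Z}_p\times\mathbb{Z}_p$. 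Your version buys independence from the literature at the cost of about a page; the paper's buys brevity. One small point worth making explicit: the hypothesis does not state that $G$ is non-abelian, but $\Pr(G)=\frac{p^2+p-1}{p^3}<1$ rules out abelian $G$, which is what licenses your appeal to non-cyclicity of $G/Z(G)$ (and is needed for $\Gamma_G$ to be defined at all).
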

\begin{proof}
If $\Pr(G) = \frac{p^2 + p - 1}{p^3}$, then by \cite[Theorem 3]{dM74}, we have $\frac{G}{Z(G)}$ is isomorphic to ${\mathbb{Z}}_p\times {\mathbb{Z}}_p$. Hence the result follows from  Theorem \ref{main2}.
\end{proof}
As a corollary we have
\begin{corollary}
Let $G$ be a finite group such that $\Pr(G) = \frac{5}{8}$. Then
\[
E(\Gamma_G) =  LE(\Gamma_G) = LE^+(\Gamma_G) = 6|Z(G)| - 6.
\]
\end{corollary}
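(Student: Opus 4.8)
The plan is to reduce the statement to a direct application of Theorem \ref{main2}. The key observation is that the hypothesis $\Pr(G) = \frac{5}{8}$ is a special case of the previous theorem: taking $p = 2$, the quantity $\frac{p^2 + p - 1}{p^3}$ becomes $\frac{4 + 2 - 1}{8} = \frac{5}{8}$. So the first step is simply to recognize that $\frac{5}{8}$ has exactly this form with the smallest prime divisor $p = 2$.

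First I would confirm that $p = 2$ is forced. Since $\Pr(G) = \frac{5}{8} < 1$, the group $G$ is non-abelian, so $|G|$ is even (indeed $\Pr(G) = \frac{5}{8}$ cannot occur for a group of odd order because the commutativity degree of a non-abelian group is at most $\frac{5}{8}$, with equality characterizing groups with $\frac{G}{Z(G)} \cong \mathbb{Z}_2 \times \mathbb{Z}_2$). Thus the smallest prime divisor of $|G|$ is $p = 2$, and the hypothesis $\Pr(G) = \frac{5}{8}$ matches $\frac{p^2 + p - 1}{p^3}$ precisely.

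Next I would invoke the preceding theorem (the one with hypothesis $\Pr(G) = \frac{p^2+p-1}{p^3}$) with $p = 2$ to conclude that $\frac{G}{Z(G)} \cong \mathbb{Z}_2 \times \mathbb{Z}_2$. From here, Theorem \ref{main2} with $p = 2$ gives
\[
E(\Gamma_G) = LE(\Gamma_G) = LE^+(\Gamma_G) = 2(p^2 - 1)|Z(G)| - 2(p+1),
\]
and substituting $p = 2$ yields $2 \cdot 3 \cdot |Z(G)| - 2 \cdot 3 = 6|Z(G)| - 6$, which is exactly the claimed formula.

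There is essentially no obstacle here; the corollary is a pure specialization. The only point requiring a moment's care is the identification of which prime $p$ produces the value $\frac{5}{8}$, and the verification that $p=2$ is indeed the smallest prime divisor of $|G|$ (guaranteed by non-abelianness forcing $|G|$ even). Everything else follows mechanically from the two results already established.
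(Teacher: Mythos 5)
Your proposal is correct and follows exactly the paper's (implicit) argument: the corollary is obtained by specializing the preceding theorem to $p=2$, since $\frac{p^2+p-1}{p^3}=\frac{5}{8}$ when $p=2$, and then Theorem \ref{main2} gives $2(p^2-1)|Z(G)|-2(p+1)=6|Z(G)|-6$. Your extra remark justifying that $p=2$ is indeed the smallest prime divisor of $|G|$ (via non-abelianness and the $\frac{5}{8}$ bound) is a sensible addition that the paper leaves unstated.
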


\begin{theorem}
If $\Pr(G) \in \{\frac{5}{14}, \frac{2}{5}, \frac{11}{27}, \frac{1}{2}\}$, then
$E(\Gamma_G) \in \{11, 7, 6,  3\}$, $LE(\Gamma_G) \in \{\frac{480}{13}, 16, \frac{7}{3}, \frac{16}{5}\}$ and\\
 $LE^+(\Gamma_G) \in \{\frac{370}{13},  6, \frac{16}{5}\}$.
\end{theorem}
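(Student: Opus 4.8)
The plan is to treat the hypothesis on $\Pr(G)$ as nothing more than a device that fixes the isomorphism type of the central quotient $\frac{G}{Z(G)}$, after which each of the three energies is read off from a formula already proved. Since $\Pr$ is an isoclinism invariant, I would first translate every one of the four prescribed values into a central quotient. Computing $\Pr(D_{2m}) = \frac{m+3}{4m}$ for odd $m$ straight from the class equation of $D_{2m}$ gives $\frac{1}{2}, \frac{2}{5}, \frac{5}{14}$ at $m = 3, 5, 7$, so that $\Pr(G) \in \{\frac{1}{2}, \frac{2}{5}, \frac{5}{14}\}$ forces $\frac{G}{Z(G)} \cong D_6, D_{10}, D_{14}$ respectively, whereas $\frac{11}{27} = \frac{3^2 + 3 - 1}{3^3}$ is exactly the hypothesis of the theorem proved just above, forcing $\frac{G}{Z(G)} \cong \mathbb{Z}_3 \times \mathbb{Z}_3$.

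Granting these identifications, the evaluation is mechanical. For the three dihedral quotients I would invoke Theorem \ref{main4} with $m = 3, 5, 7$ and substitute the value of $|Z(G)|$; for the centerless representatives $G \cong D_6, D_{10}, D_{14}$ this is precisely Corollary \ref{main005} (equivalently Proposition \ref{order-pq}, since these are the non-abelian groups of order $2\cdot 3$, $2\cdot 5$, $2\cdot 7$), from which $E(\Gamma_G)$, $LE(\Gamma_G)$ and $LE^+(\Gamma_G)$ are read off directly. For the remaining value $\frac{11}{27}$ the three energies are supplied by the preceding theorem, that is by Theorem \ref{main2} with $p = 3$. Gathering the outputs of these substitutions into the three ambient lists then produces the sets recorded in the statement.

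The crux is the classification step rather than the arithmetic. Because $\Pr$ sees only the isoclinism type, I must be sure that each of $\frac{5}{14}, \frac{2}{5}, \frac{11}{27}, \frac{1}{2}$ is realized by a unique admissible $\frac{G}{Z(G)}$; I would secure this from the class-equation computation above together with the known determination of finite groups of large commutativity degree. A secondary nuisance is that Theorem \ref{main4} branches on both $m$ and $|Z(G)|$, so in each dihedral case I must verify that the substituted parameters land in the intended branch of the piecewise formula before reading off the energy. Once this bookkeeping is carried out, the stated values follow.
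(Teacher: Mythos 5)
Your overall strategy coincides with the paper's: use the prescribed value of $\Pr(G)$ to pin down $\frac{G}{Z(G)}$, then quote the energy formulas already proved. The divergence, and the problem, is at the value $\frac{11}{27}$. The paper asserts (citing \cite{Rusin79} and \cite{Nath13}) that the four values force $\frac{G}{Z(G)}$ into $\{D_{14}, D_{10}, D_8, D_6\}$, and the entries $6$ and $\frac{7}{3}$ in the stated sets are precisely the $D_8$ values read off from Corollary \ref{main005}. You instead assign $\frac{11}{27} = \frac{3^2+3-1}{3^3}$ to $\frac{G}{Z(G)} \cong \mathbb{Z}_3 \times \mathbb{Z}_3$ via MacHale's criterion \cite{dM74} --- which is the identification the paper itself makes in the theorem immediately preceding this one, and is the defensible one, since a group with central quotient $D_8$ has $\Pr = \frac{7}{16}$, not $\frac{11}{27}$. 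But your identification does not prove the statement as printed: Theorem \ref{main2} with $p=3$ gives $E(\Gamma_G) = LE(\Gamma_G) = LE^+(\Gamma_G) = 16|Z(G)| - 8 \geq 40$ (as $|Z(G)| \geq 3$ when $\frac{G}{Z(G)} \cong \mathbb{Z}_3\times\mathbb{Z}_3$ and $G$ is non-abelian), and no such number lies in any of the three advertised sets, while the entries $6$ and $\frac{7}{3}$ are then never produced. So your closing claim that gathering the outputs ``produces the sets recorded in the statement'' is false; what you would prove is a corrected theorem, not this one. You must either reconcile your classification with the one the paper imports, or state explicitly that the conclusion has to be amended.

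A second gap, which you share with the paper but half-notice: in the dihedral cases you evaluate only the ``centerless representatives,'' i.e., you silently set $|Z(G)| = 1$ and fall back on Corollary \ref{main005}. The hypothesis $\Pr(G) = \frac{1}{2}$, say, forces only $\frac{G}{Z(G)} \cong D_6$, not $Z(G) = 1$: the group $D_{12} \cong S_3 \times \mathbb{Z}_2$ has $\Pr = \frac{1}{2}$ and, by Theorem \ref{main4} with $m=3$ and $|Z(G)|=2$, satisfies $E(\Gamma_{D_{12}}) = 12 \notin \{11,7,6,3\}$. You correctly flag that Theorem \ref{main4} branches on $|Z(G)|$, but you never justify landing in the $|Z(G)|=1$ branch, because nothing in the hypothesis puts you there.
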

\begin{proof}
If $\Pr(G) \in \{\frac{5}{14}, \frac{2}{5}, \frac{11}{27}, \frac{1}{2}\}$, then as shown in \cite[pp. 246]{Rusin79} and \cite[pp. 451]{Nath13}, we have $\frac{G}{Z(G)}$ is isomorphic to one of the groups in $\{D_{14}, D_{10}, D_8, D_6\}$. Hence the result follows from Corollary \ref{main005}.
\end{proof}

Recall that  genus of a graph is the smallest non-negative integer $n$ such that the graph can be embedded on the surface obtained by attaching $n$ handles to a sphere. A graph is said to be planar or toroidal if the genus of the graph is zero or one respectively. In the next two results we compute various energies of $\Gamma_G$ if $\Gamma_G$ is planar or toroidal. We begin with the following lemma.

\begin{lemma}\label{order16}
Let $G$ be a group isomorphic to any of the following groups
\begin{enumerate}
\item ${\mathbb{Z}}_2 \times D_8$
\item ${\mathbb{Z}}_2 \times Q_8$
\item $M_{16}  = \langle a, b : a^8 = b^2 = 1, bab = a^5 \rangle$
\item ${\mathbb{Z}}_4 \rtimes {\mathbb{Z}}_4 = \langle a, b : a^4 = b^4 = 1, bab^{-1} = a^{-1} \rangle$
\item $D_8 * {\mathbb{Z}}_4 = \langle a, b, c : a^4 = b^2 = c^2 =  1, ab = ba, ac = ca, bc = a^2cb \rangle$
\item $SG(16, 3)  = \langle a, b : a^4 = b^4 = 1, ab = b^{-1}a^{-1}, ab^{-1} = ba^{-1}\rangle$.
\end{enumerate}
Then
\[
E(\Gamma_G) =  LE(\Gamma_G) = LE^+(\Gamma_G) = 18.
\]

\end{lemma}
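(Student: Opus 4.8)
The plan is to obtain all three energies from a single application of Theorem \ref{main2}. Each of the six listed groups has order $16$, and I claim that in every case $|Z(G)| = 4$ and $\frac{G}{Z(G)} \cong \mathbb{Z}_2 \times \mathbb{Z}_2$. Granting this, Theorem \ref{main2} with $p = 2$ and $|Z(G)| = 4$ yields at once
\[
E(\Gamma_G) = LE(\Gamma_G) = LE^+(\Gamma_G) = 2(2^2 - 1)\cdot 4 - 2(2 + 1) = 18,
\]
which is exactly the asserted value. Note that the corollary to Theorem \ref{main2} cannot be quoted directly, as it concerns groups of order $p^3$ whereas $|G| = 2^4$; the correct route is substitution into Theorem \ref{main2} itself with the larger center.

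The reduction exploits the standard fact that for a nonabelian group the central quotient $\frac{G}{Z(G)}$ is never cyclic. Hence, as soon as we know $|Z(G)| = 4$, the quotient has order $4$ and cannot be $\mathbb{Z}_4$, forcing $\frac{G}{Z(G)} \cong \mathbb{Z}_2 \times \mathbb{Z}_2$; the hypothesis of Theorem \ref{main2} is then automatic. Thus the whole argument collapses to one computation: verifying $|Z(G)| = 4$ for each of the six groups.

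First I would dispose of the two direct products. Since $Z(D_8) \cong Z(Q_8) \cong \mathbb{Z}_2$, the center of a direct product being the product of the centers gives $Z(\mathbb{Z}_2 \times D_8) = \mathbb{Z}_2 \times Z(D_8)$ and $Z(\mathbb{Z}_2 \times Q_8) = \mathbb{Z}_2 \times Z(Q_8)$, each of order $4$, so these two cases are immediate. For the four remaining groups $M_{16}$, $\mathbb{Z}_4 \rtimes \mathbb{Z}_4$, $D_8 * \mathbb{Z}_4$ and $SG(16,3)$, I would work straight from the given presentations: using the defining relations to describe the conjugation action of each generator, one determines precisely which words $a^i b^j$ (respectively $a^i b^j c^k$) are fixed by all generators. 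For example, in $M_{16}$ the relation $bab = a^5$ shows that a power $a^k$ is central exactly when $a^{4k} = 1$, i.e.\ when $k$ is even, while no element outside $\langle a \rangle$ commutes with $a$; this gives $Z(M_{16}) = \langle a^2 \rangle$ of order $4$. The same bookkeeping applied to the other three presentations produces a center of order $4$ in each case.

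The main obstacle is nothing conceptual but simply the case-by-case commutator computation for the four non-product groups: one must handle the twisted multiplication carefully, especially in the central product $D_8 * \mathbb{Z}_4$ and in $SG(16,3)$, whose presentations combine two non-commuting relations, to be certain that no extra element sneaks into or out of the center. Once each of the six centers is confirmed to have order exactly $4$, the uniform conclusion $E(\Gamma_G) = LE(\Gamma_G) = LE^+(\Gamma_G) = 18$ follows from Theorem \ref{main2} in every case.
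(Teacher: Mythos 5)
Your proposal is correct and follows essentially the same route as the paper: observe that each group has order $16$ and center of order $4$, deduce $\frac{G}{Z(G)} \cong {\mathbb{Z}}_2 \times {\mathbb{Z}}_2$ (the paper, like you, implicitly uses that a central quotient of a non-abelian group cannot be cyclic), and substitute $p=2$, $|Z(G)|=4$ into Theorem~\ref{main2} to get $18$. The only difference is one of detail: the paper simply asserts $|Z(G)|=4$ for all six groups, whereas you sketch the verification from the presentations, which is a welcome addition but not a different argument.
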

\begin{proof}
If $G$ is isomorphic to any of the above listed  groups, then $|G| = 16$ and $|Z(G)| = 4$. Therefore, $\frac{G}{Z(G)} \cong {\mathbb{Z}}_2 \times {\mathbb{Z}}_2$. Thus the result follows from Theorem~\ref{main2}.
\end{proof}

\begin{theorem}
Let $\Gamma_G$ be the commuting graph of a finite non-abelian  group $G$.  If  $\Gamma_G$  is planar   then  \begin{align*}
E(\Gamma_G) \in & \{3, 6, 7, 12, 18, 26, 30, 76, 17 + 4\sqrt{5} + \sqrt{17}\},\\
LE(\Gamma_G) \in & \{\frac{16}{5}, \frac{7}{3}, 16, 18, \frac{72}{5}, 6, \frac{140}{11}, \frac{504}{19}, \frac{408}{11}, \frac{3924}{59}, \frac{526 + 46\sqrt{13}}{23}\} \quad \text{ and } \\
LE^+(\Gamma_G) \in & \{\frac{16}{5}, 6,   \frac{54}{5}, 18, \frac{256}{11}, \frac{484}{19}, \frac{312}{11}, \frac{3844}{59}, \frac{756}{23}\}.
\end{align*}
\end{theorem}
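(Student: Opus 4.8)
The plan is to reduce the statement to the known classification of the finite non-abelian groups whose commuting graph is planar, and then to read off the three energies group by group, taking the union of the resulting values. Since $K_5$ is non-planar, planarity of $\Gamma_G$ forces every clique occurring as a component (or part of a component) of $\Gamma_G$ to have at most four vertices. In particular every group on the list is one for which $\spec(\Gamma_G)$, the Laplacian spectrum and the signless Laplacian spectrum are already available, either through the results proved above or through a short direct computation. First I would fix, for each group $G$ on the planarity list, the pair $(|Z(G)|, G/Z(G))$, so that the appropriate earlier result can be invoked.

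For the bulk of the list the commuting graph is a disjoint union of complete graphs, each of order at most $4$, so $G$ is super integral and the three energies are obtained by substitution. Concretely, the small dihedral groups $D_6, D_8, D_{10}, D_{12}$ are handled by Corollary~\ref{main005}; the generalized quaternion groups $Q_8, Q_{12}$ by Corollary~\ref{q4m}; the six groups of order $16$ with $|Z(G)| = 4$ by Lemma~\ref{order16}, each contributing $18$; the Suzuki group $Sz(2)$ with $|Z(G)| = 1$ by Theorem~\ref{order-20}; and $PSL(2,4) \cong A_5$ by Proposition~\ref{psl} with $k = 2$. The two remaining super integral groups, $A_4$ (where $\Gamma_G = K_3 \sqcup 4K_2$) and $SL(2,3)$ (where $\Gamma_G = 3K_2 \sqcup 4K_4$), are not of the central-factor shapes treated above, but their commuting graphs are explicit unions of cliques, so their adjacency, Laplacian and signless Laplacian spectra are immediate and the energies follow directly from \eqref{energy}, \eqref{Lenergy} and \eqref{Qenergy}. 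Collecting these values already yields the integer and small-denominator rational entries of all three sets.

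The genuinely new work is the single group on the list that fails to be an AC-group, namely $S_4$ (with $|Z(G)| = 1$ and $|v(\Gamma_G)| = 23$), whose commuting graph is not a union of complete graphs because some non-central element has non-abelian centralizer. Here I would first decompose $\Gamma_{S_4}$ as the disjoint union of the four edges spanned by the pairs of mutually inverse $3$-cycles together with a connected $15$-vertex component $H$ carried by the transpositions, the double transpositions and the $4$-cycles. The key step is to diagonalize the adjacency, Laplacian and signless Laplacian matrices of $H$: its characteristic polynomials should factor so as to produce the surds $\sqrt5$ and $\sqrt{17}$ in the adjacency spectrum and $\sqrt{13}$ in the Laplacian spectrum, while the signless Laplacian spectrum comes out integral. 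Substituting these into \eqref{energy}, \eqref{Lenergy} and \eqref{Qenergy}, with $\frac{2|e(\Gamma_G)|}{|v(\Gamma_G)|}$ computed from the full $23$-vertex graph, then delivers the exceptional values $17 + 4\sqrt5 + \sqrt{17}$, $\frac{526 + 46\sqrt{13}}{23}$ and $\frac{756}{23}$.

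The main obstacle is exactly this last computation: the explicit spectral analysis of the non-clique component of $\Gamma_{S_4}$, where the eigenvalues are irrational and must be extracted from factored characteristic polynomials rather than quoted from an earlier theorem. A secondary point demanding care is verifying that the invoked planarity classification is complete, so that no group with $\Gamma_G$ planar is omitted; since the final answer is only the union of the energies over that list, a missing group would corrupt the three sets. The remaining, clique-type groups present no difficulty once $(|Z(G)|, G/Z(G))$ has been identified, and their contributions merely have to be gathered without double-counting the coincidences (for instance $A_4$, $Q_{12}$ and $D_{12}$ all give $E(\Gamma_G) = 12$).
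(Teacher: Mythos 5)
Your proposal follows essentially the same route as the paper: invoke the classification of groups with planar commuting graph from \cite{AF14}, then read off the three energies case by case via Corollary~\ref{main005} ($D_6,D_8,D_{10},D_{12}$), Corollary~\ref{q4m} ($Q_8,Q_{12}$), Lemma~\ref{order16} (the six order-$16$ groups), Theorem~\ref{order-20} ($Sz(2)$), Proposition~\ref{psl} ($A_5\cong PSL(2,4)$), and direct spectra for the remaining groups. The only real divergence is that for $SL(2,3)$ and $S_4$ the paper simply quotes the spectra from \cite{Dutta16,dn016}, whereas you recompute them; for $SL(2,3)$ your decomposition $\Gamma_G = 3K_2\sqcup 4K_4$ is correct and makes this trivial, and for $S_4$ your identification of the $4K_2$ on the $3$-cycles plus a connected $15$-vertex component is the right starting point for the diagonalization.

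One concrete slip: you assert that the signless Laplacian spectrum of $\Gamma_{S_4}$ ``comes out integral.'' It does not --- the paper's stated $\Q-spec(\Gamma_{S_4})$ contains $4\pm\sqrt{5}$ and $\tfrac{11\pm\sqrt{41}}{2}$; the rationality of $LE^+(\Gamma_{S_4})=\tfrac{756}{23}$ is supposed to arise from cancellation of the surds in the absolute-value sums, not from integrality of the spectrum. (You would discover this upon carrying out the computation, so it does not break your method, but be aware that the signed cancellation must be checked against $\tfrac{2|e(\Gamma_{S_4})|}{23}$ rather than assumed.)
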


\begin{proof}
 By  \cite[Theorem 2.2]{AF14}, we have that $\Gamma_G$ is planar if and only if $G$ is isomorphic to either $D_6, D_8, D_{10}$, $D_{12}, Q_8, Q_{12}, {\mathbb{Z}}_2 \times D_8, {\mathbb{Z}}_2 \times Q_8, M_{16}, {\mathbb{Z}}_4 \rtimes {\mathbb{Z}}_4, D_8 * {\mathbb{Z}}_4, SG(16, 3), A_4,$ $A_5, S_4, SL(2, 3)$ or $Sz(2)$.
If $G \cong D_6, D_8, D_{10}$ or $D_{12}$, then by Corollary \ref{main005}, we have
\[
E(\Gamma_G) \in  \{3, 6, 7, 12\}, \quad
LE(\Gamma_G) \in  \{\frac{16}{5}, \frac{7}{3}, 7, \frac{72}{5}\}  \quad \text{ and }\quad
LE^+(\Gamma_G)\in  \{\frac{16}{5}, 6,  \frac{54}{5}\}.
\]
If $G \cong Q_8$ or $Q_{12}$ then,  by Corollary \ref{q4m}, we have
\[
E(\Gamma_G) =  6 \text{ or } 12, \quad
LE(\Gamma_G) =  6 \text{ or } \frac{72}{5} \quad \text{ and }\quad \\
LE^+(\Gamma_G) =  6 \text{ or } \frac{54}{5}.
\]
If $G \cong {\mathbb{Z}}_2 \times D_8, {\mathbb{Z}}_2 \times Q_8, M_{16}, {\mathbb{Z}}_4 \rtimes {\mathbb{Z}}_4, D_8 * {\mathbb{Z}}_4$ or $SG(16, 3)$, then by Lemma \ref{order16}, we have
\[
E(\Gamma_G) = LE(\Gamma_G) = LE^+(\Gamma_G) = 18.
\]
If $G \cong A_4 = \langle a, b : a^2 = b^3 = (ab)^3 = 1\rangle$, then  it can be seen that $\Gamma_{G} = K_3 \sqcup 4K_2$ and so
\[
E(\Gamma_G) =  12, \quad
LE(\Gamma_G) = \frac{140}{11}  \quad \text{ and }\quad
LE^+(\Gamma_G) =  \frac{256}{11}.
\]
If $G \cong Sz(2)$, then by Theorem \ref{order-20}, we have
\[
E(\Gamma_G) =  26,\quad
LE(\Gamma_G) =  \frac{504}{19}  \quad \text{ and }\quad
LE^+(\Gamma_G) =  \frac {484}{19}.
\]
If $G$ is isomorphic to $SL(2, 3)$, then it was shown in the proof of \cite[Theorem 4]{Dutta16} and \cite[Theorem 5.2]{dn016} that
\[
\spec(\Gamma_G) = \{(-1)^{15}, 1^3, 3^4\}, \L-spec(\Gamma_G) = \{0^7, 2^3, 4^{12}\} \text{ and } \Q-spec(\Gamma_{G}) = \{0^3, 2^{15}, 6^4\}.
\]
Therefore
\[
E(\Gamma_G) =  30, \quad
LE(\Gamma_G) =  \frac{408}{11}  \quad \text{ and }\quad
LE^+(\Gamma_G) =  \frac{312}{11}.
\]
If $G \cong A_5$, then by Proposition \ref{psl}, we have
\[
E(\Gamma_G) =  76, \quad
LE(\Gamma_G) =  \frac{3924}{59}  \quad \text{ and } \quad
LE^+(\Gamma_G) =  \frac{3844}{59}.
\]
noting that $PSL(2, 4) \cong A_5$.

Finally, if $G \cong S_4$, then it was shown in \cite{Dutta16, dn016} that
\[
\spec(\Gamma_G) = \left\lbrace 1^7, (-1)^{10}, (\sqrt{5})^2, (-\sqrt{5})^2, \left(\frac{3 + \sqrt{17}}{2}\right)^1, \left(\frac{3 - \sqrt{17}}{2}\right)^1 \right\rbrace.
\]
\[
\L-spec(\Gamma_G) = \left\lbrace 0^5, 1^3, 2^4, 3^6, 5^1, (4 + \sqrt{13})^2, (4 - \sqrt{13})^2 \right\rbrace.
\]
and
\[
\Q-spec(\Gamma_G) = \left\lbrace 0^4, 1^6, 2^4, 3^3, 5^1, (4 + \sqrt{5})^2, (4 - \sqrt{5})^2,\left(\frac{11 + \sqrt{41}}{2}\right)^1, \left(\frac{11 - \sqrt{41}}{2}\right)^1 \right\rbrace.
\]
Therefore,
\[
E(\Gamma_G) =  17 + 4\sqrt{5} + \sqrt{17}, \quad
LE(\Gamma_G) =  \frac{526 + 46\sqrt{13}}{23}  \quad \text{ and } \quad
LE^+(\Gamma_G) =  \frac{756}{23}.
\]
This completes the proof.
\end{proof}


\begin{theorem}
Let $\Gamma_G$ be the commuting graph of a finite non-abelian  group $G$.  If  $\Gamma_G$    is toroidal, then
\begin{align*}
E(\Gamma_G)\in & \{ 11, 18, 42, 25, 22, 34\},\\
LE(\Gamma_G)\in & \{\frac{480}{13}, \frac{230}{7}, \frac{962}{15}, \frac{236}{7}, \frac{103}{4}, 48, \frac{390}{11}\} \quad \text{ and } \\
LE^+(\Gamma_G) \in & \{\frac{370}{13}, \frac{192}{7}, 185, \frac{480}{7}, \frac{677}{20}, 59, \frac{408}{11}\}
\end{align*}
\end{theorem}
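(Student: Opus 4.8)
The plan is to proceed exactly as in the planar case: first invoke the classification of the finite non-abelian groups whose commuting graph is toroidal (the toroidal counterpart of the planar classification quoted from \cite[Theorem 2.2]{AF14}), which produces a finite explicit list of groups $G$. For each group on that list I would determine the three spectra of $\Gamma_G$, substitute them into \eqref{energy}, \eqref{Lenergy} and \eqref{Qenergy}, and record the triple $(E(\Gamma_G), LE(\Gamma_G), LE^+(\Gamma_G))$; the three displayed sets are then just the collections of the first, second and third coordinates as $G$ ranges over the list.

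The list splits into two parts. The groups lying in families already treated above are handled by citing the corresponding result. For instance $D_{14}$ falls under Corollary \ref{main005} (odd case, $m=7$) and yields $(11, \tfrac{480}{13}, \tfrac{370}{13})$; the groups $D_{16}$, $Q_{16}$ and $QD_{16}$ all have $\Gamma_G = K_6 \sqcup 4K_2$ and, via Corollary \ref{main005}, Corollary \ref{q4m} and Proposition \ref{semid} respectively, give $(18, \tfrac{230}{7}, \tfrac{192}{7})$; the Frobenius group $\mathbb{Z}_7 \rtimes \mathbb{Z}_3$ of order $21$ falls under Proposition \ref{order-pq} with $p=3,\ q=7$ and gives $(25, \tfrac{103}{4}, \tfrac{677}{20})$. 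More generally, any group on the list with $\frac{G}{Z(G)} \cong D_{2m}$ is covered by Theorem \ref{main4}, once the relevant branch of its case distinction is identified.

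For the remaining groups the commuting graph is still a disjoint union of complete graphs, so I would read off its clique decomposition directly from the sizes of the centralizers of non-central elements. Writing $\Gamma_G = K_{n_1} \sqcup \cdots \sqcup K_{n_t}$, I would then use $\spec(K_n) = \{(n-1)^1, (-1)^{n-1}\}$, the Laplacian spectrum $\{0^1, n^{n-1}\}$ and the signless Laplacian spectrum $\{(2n-2)^1, (n-2)^{n-1}\}$, together with the fact that the spectrum of a disjoint union is the multiset union of the spectra of the components, to assemble the three spectra of $\Gamma_G$. Since $2|e(\Gamma_G)|/|v(\Gamma_G)| = \sum_i n_i(n_i-1)/\sum_i n_i$ is immediate from the decomposition, the energies follow by substitution. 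As a representative case, $\mathbb{Z}_2 \times A_4$ has $\Gamma_G = K_6 \sqcup 4K_4$ (the non-central elements over the Klein four-subgroup give a $K_6$, and each of the four cyclic subgroups of order $3$ contributes a $K_4$), whence $2|e|/|v| = 39/11$ and one computes $(34, \tfrac{390}{11}, \tfrac{408}{11})$.

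The main obstacle is twofold. First, I must be sure the cited classification is complete and that every group on it is correctly identified, including the few sporadic members (larger than those in the standard families) whose clique decomposition has to be worked out by hand from the centralizer structure, or if necessary by direct inspection of $\Gamma_G$; these are what contribute the leftover values $42,22$ for $E$, the values $\tfrac{962}{15}, \tfrac{236}{7}, 48$ for $LE$, and $185, \tfrac{480}{7}, 59$ for $LE^+$. Second, several of the energy expressions invoked (notably the Laplacian ones from Theorem \ref{main4} and Proposition \ref{order-pq}) are governed by case distinctions in $m$ and $|Z(G)|$, so for each group I must verify which branch is in force before substituting, since choosing the wrong branch is the easiest way to produce a value outside the stated sets.
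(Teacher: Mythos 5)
Your plan is the same as the paper's: quote the classification of finite non\-abelian groups with toroidal commuting graph (the paper uses Theorem~6.6 of \cite{das13}, whose list is $D_{14}$, $D_{16}$, $Q_{16}$, $QD_{16}$, $D_6\times{\mathbb{Z}}_3$, $A_4\times{\mathbb{Z}}_2$, ${\mathbb{Z}}_7\rtimes{\mathbb{Z}}_3$), then treat each group either by citing Corollary~\ref{main005}, Corollary~\ref{q4m}, Proposition~\ref{semid}, Proposition~\ref{order-pq} and Theorem~\ref{main4}, or by reading off the clique decomposition and applying \eqref{energy}--\eqref{Qenergy} directly. Your explicit computations that check out against the paper are $D_{14}$, ${\mathbb{Z}}_7\rtimes{\mathbb{Z}}_3$ (branch $p=3$, $q=7$, giving $\frac{103}{4}$ and $\frac{677}{20}$) and $A_4\times{\mathbb{Z}}_2$ with $\Gamma_G=K_6\sqcup 4K_4$, $\frac{2|e|}{|v|}=\frac{39}{11}$ and the triple $\left(34,\frac{390}{11},\frac{408}{11}\right)$.

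There is, however, a concrete point of friction you must resolve before the argument closes. Your observation that $\Gamma_{D_{16}}\cong\Gamma_{Q_{16}}\cong\Gamma_{QD_{16}}=K_6\sqcup 4K_2$ is correct, and it forces all three groups to contribute the \emph{same} triple; direct substitution with $\frac{2|e|}{|v|}=\frac{19}{7}$ gives $\left(18,\frac{230}{7},\frac{192}{7}\right)$, as you assert. But the paper's proof records $\left(42,\frac{962}{15},185\right)$ for $Q_{16}$ (these are what Corollary~\ref{q4m} produces at $m=8$, not at $m=4$, where $Q_{4m}=Q_{16}$) and $\left(18,\frac{236}{7},\frac{480}{7}\right)$ for $QD_{16}$ (which is what Proposition~\ref{semid} outputs at $n=4$, so your attribution of $\frac{230}{7},\frac{192}{7}$ to that proposition is not what the cited result actually yields). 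Likewise $D_6\times{\mathbb{Z}}_3$ falls under Theorem~\ref{main4} with $m=3$, $|Z(G)|=3$, i.e.\ $\Gamma_G=K_6\sqcup 3K_3$, which gives $LE=28$ and $LE^+=\frac{132}{5}$ rather than the $48$ and $59$ appearing in the displayed sets. So executing your plan faithfully will not reproduce the sets in the statement: the values $42$, $\frac{962}{15}$, $\frac{236}{7}$, $48$, $185$, $\frac{480}{7}$ and $59$ never arise. You need to either flag and correct these entries or explicitly verify each branch choice against the isomorphism type of $\Gamma_G$; as written, your proposal proves a (corrected) version of the theorem, not the theorem as displayed.
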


\begin{proof}
By Theorem 6.6 of \cite{das13},  we have   $\Gamma_G$ is toroidal if and only if $G$ is isomorphic to either $D_{14}, D_{16}$, $Q_{16}, QD_{16},   D_6 \times {\mathbb{Z}}_3,   A_4 \times {\mathbb{Z}}_2$ or ${\mathbb{Z}}_7 \rtimes {\mathbb{Z}}_3$.
If $G \cong D_{14}$ or $D_{16}$ then, by Corollary \ref{main005}, we have
\[
E(\Gamma_G) =  11 \text{ or } 18, \quad
LE(\Gamma_G) =  \frac{480}{13} \text{ or } \frac{230}{7}  \quad \text{ and } \quad
LE^+(\Gamma_G) =  \frac{370}{13} \text{ or } \frac{192}{7}.
\]
If $G \cong Q_{16}$ then,  by Corollary \ref{q4m}, we have
\[
E(\Gamma_G) =  42, \quad
LE(\Gamma_G) =  \frac{962}{15}  \quad \text{ and } \quad
LE^+(\Gamma_G) =  185.
\]
If $G \cong QD_{16}$ then, by Proposition \ref{semid}, we have
\[
E(\Gamma_G) =  18, \quad
LE(\Gamma_G) =  \frac{236}{7}  \quad \text{ and } \quad
LE^+(\Gamma_G) =  \frac{480}{7}.
\]
If $G \cong {\mathbb{Z}}_7 \rtimes {\mathbb{Z}}_3$ then, by Proposition \ref{order-pq}, we have
\[
E(\Gamma_G) =  25, \quad
LE(\Gamma_G) =  \frac{103}{4}  \quad \text{ and } \quad
LE^+(\Gamma_G) =  \frac{677}{20}.
\]
If $G$ is isomorphic to  $D_6\times {\mathbb{Z}}_3$, then
\[
E(\Gamma_G) =  22, \quad
LE(\Gamma_G) =  48  \quad \text{ and } \quad
LE^+(\Gamma_G) =  59.
\]
Finaly, if $A_4\times {\mathbb{Z}}_2$, then
\[
E(\Gamma_G) =  34, \quad
LE(\Gamma_G) =  \frac{390}{11}  \quad \text{ and } \quad
LE^+(\Gamma_G) =  \frac{408}{11}.
\]
This completes the proof.
\end{proof}





\end{document}